\documentclass[10pt]{article}
\usepackage{etex}
\reserveinserts{28}

\usepackage{amsmath,amssymb,epsfig,graphicx}
\usepackage{amsthm}
\usepackage{hyperref}
\usepackage[hmarginratio=1:1, bottom=3cm]{geometry}
\usepackage{tkz-euclide}


\usepackage{epstopdf}
\newcommand{\ms}{\sigma}

\usepackage{floatrow}

\usepackage[utf8]{inputenc} 
\usepackage{color}
\usepackage{enumerate}
\usepackage{setspace}
\usepackage{subfig}
\usepackage{subfloat}
\usepackage{enumitem}
\usepackage{enumerate}
\usepackage{subfig}
\usepackage{enumerate}
\usepackage{enumitem}
\usepackage{mathrsfs}
\usepackage{cancel}

\newtheorem{teo}{Theorem}[section]
\newtheorem{rem}{Remark}[section]
\newtheorem{prop}{Proposition}[section]

\newtheorem{lem}[teo]{Lemma}


\newcommand{\cS}{{\mathcal S}}

\newcommand\R{{\mathbb R}}

\newcommand\N{{\mathbb N}}
\newcommand\Z{{\mathbb Z}}
\newcommand\vare{{\varepsilon}}


\def\harf{\hbox{$\frac{1}{2}$}}


\newcommand{\cE}{{\mathcal E}}

\newcommand{\dx}{\Delta x}

\newcommand{\eps}{\epsilon}
\newcommand{\beqn}{\begin{equation}}
\newcommand{\eeqn}{  \end{equation}}
\newcommand{\beqno}{\begin{equation*}}
\newcommand{\eeqno}{  \end{equation*}}
\newcommand{\be}{\begin{eqnarray}}
\newcommand{\ee}{  \end{eqnarray}}
\newcommand{\beno}{\begin{eqnarray*}}
\newcommand{\eeno}{  \end{eqnarray*}}
\newcommand{\cblue}[1]{{\color{blue} #1}}
\newcommand{\cred} [1]{{\color{red}  #1}}

\newcommand{\conv}{\rightarrow}

\newcommand{\I}{{\mathbb I}}
\def\fud{\hbox{$\frac{1}{2}$}}

\newcommand{\COMMENTED}[1]{}
\newcommand{\MODIF}[1]{\cblue{#1}}
\newcommand{\QUESTION}[1]{\fbox{\cred{#1}}}

\newcommand{\ccS}{{\mathscr S}}
\newcommand{\da}{\Delta a}
\newcommand{\ma}{\alpha}

\pagenumbering{arabic}
\numberwithin{equation}{section}

\newenvironment{taggedtheorem}[1]
 {\taggedtheoremx}
 {\endtaggedtheoremx}

\date{}

\begin{document}
\title{High-order filtered schemes for  time-dependent second order HJB equations}
\date{\today}

\author{%
Olivier Bokanowski\footnote{Laboratoire Jacques-Louis Lions, Universit{\'e} Paris-Diderot,
5 Rue Thomas Mann, 75205 Paris, Cedex13,
and Laboratoire UMA, Ensta ParisTech, Palaiseau, 
\texttt{boka\,@\,math.univ-paris-diderot.fr}},
Athena Picarelli\footnote{Mathematical Institute, University of Oxford,
Andrew Wiles Building, Woodstock Rd, Oxford OX2 6GG,
\texttt{\{athena.picarelli,christoph.reisinger\}\,@\,maths.ox.ac.uk}},
and Christoph Reisinger\footnotemark[\value{footnote}]
}

\maketitle

\begin{abstract}In  this paper, we  present and analyse a  class  of ``filtered''  numerical  schemes  
for  second  order Hamilton-Jacobi-Bellman (HJB) equations. 
Our approach follows the ideas introduced in B.D.\ Froese and A.M.\ Oberman, Convergent filtered schemes for the Monge-Amp\`ere partial differential equation, \emph{SIAM J.\ Numer.\ Anal.}, 51(1):423--444, 2013,
and more recently applied by other authors
to stationary or 
time-dependent first order Hamilton-Jacobi equations.
For high order approximation schemes (where ``high''  stands  for  greater  than  one),  
the inevitable loss  of  monotonicity prevents  the  use  of  the  classical theoretical results for convergence to viscosity solutions.
The work introduces a suitable local modification of these schemes by ``filtering'' them with a monotone scheme, such that they
can be proven convergent and still show an overall high order behaviour for smooth enough solutions. 
We give theoretical proofs of these claims and illustrate the behaviour with numerical tests from mathematical finance,
focussing also on the use of backward differencing formulae for constructing the high order schemes.
\end{abstract}

\medskip

{\bf Keywords.}
monotone schemes, high-order schemes, backward difference formulae, viscosity solutions, second order Hamilton-Jacobi-Bellman equations.

\medskip
\section{Introduction}\label{sec:intro}

We consider second order Hamilton-Jacobi-Bellman (HJB) equations in $\R^d$:
\begin{equation}\label{eq:HJB}
\left\{
\begin{array}{ll}
v_t+\sup_{a\in A}\mathcal L^a(t,x,v,D_x v,D^2_x v)=0  & x\in \R^d, t\in (0,T)\\
v(0,x)=v_0(x)  & x\in\R^d,
\end{array}
\right.
\end{equation}
where $\mathcal L^a:[0,T]\times\R^d\times \R\times\R^d\times\cS^{d\times d}\to \R$ {(where $\cS$ is the set of symmetric matrices)} takes the form
\begin{eqnarray}
\label{eq:operator}
\mathcal L^a(t,x,r,p,Q)=\Big\{
   -\frac{1}{2}Tr(\sigma\sigma^T(t,x,a)Q)
   +b(t,x,a)\cdot p
   +f(t,x,a)r
   +\ell(t,x,a)\Big\}
\end{eqnarray}
and  $A\subset\R^{m}$ is a nonempty and compact set. 

Existence and uniqueness of viscosity solutions to \eqref{eq:HJB} are obtained for instance under the classical assumptions
that the initial datum $v_0$ is bounded and Lipschitz continuous, and the coefficients $b,\sigma,f,\ell$ are bounded,
Lipschitz and H\"older continuous respectively in space and time, i.e.\ there is a constant $K$ such that, 
for any $\varphi\in \{b,\sigma,f,\ell\}$,
\be\label{eq:A0}
  \sup_{a\in A}\Bigg\{\sup_{(t,x)\neq (s,y)}\frac{|\varphi(t,x,a)-\varphi(s,y,a)|}{|t-s|^{1/2}+|x-y|} +\sup_{(t,x)}|\varphi(t,x,a)|\Bigg\}\leq K.
\ee
Moreover, the solution is then also Lipschitz continuous in space and locally $1/2$-H\"older continuous in time \cite{CIL92}.
The boundedness assumption can be removed, see also \cite{CIL92}. Moreover, in the paper we consider  the equation on bounded numerical domains so that the boundedness assumption is automatically satisfied.

In this article, we propose approximation schemes for (\ref{eq:HJB}) for which convergence is guaranteed in a general setting, 
and which exhibit high order convergence under sufficient regularity of the solution.
As we will explain in the following,
these are in a sense two conflicting goals, 
and we will meet them by application of a so-called ``filter'', an idea introduced in \cite{FroeOber13}.

The seminal work by Barles and Souganidis \cite{BS91} establishes 
that a consistent and stable scheme converges to the viscosity solution of (\ref{eq:HJB}) 
if it is also monotone. 
That this is not simply a requirement of the proof, but can be crucial in practice, 
is demonstrated, e.g., by \cite{pooleyetal} (and also in Section \ref{sec:UV} here).
 It is shown there empirically that for the uncertain volatility model from \cite{lyons1995uncertain}, 
perhaps the simplest non-trivial second order HJB equation there is, the (consistent and stable but) non-monotone Crank-Nicolson 
scheme fails to converge to the correct viscosity solution in the presence of Lipschitz initial data without higher regularity.
This is in contrast to classical solutions where there is no such monotonicity requirement.
As the setting of solutions above (i.e., Lipschitz in space and $1/2$-H\"older in time) is the natural setting for HJB equations
and often no higher global regularity is observed, a wide literature on monotone schemes has developed.

By Godunov's theorem \cite{godunov1959difference}, in the case of explicit linear schemes for the approximation of the linear advection equation,
the monotonicity property restricts the scheme to be of order at most one. 
Also, in \cite{pij-oos-12}, a similar result is given for the approximation of a diffusion equation and order two.
To the best of our knowledge, for general diffusions in more than one dimension, no monotone schemes of order higher than one are available in the literature.
However, the provable order of convergence for second order HJB equations under the weak assumptions above is significantly less than one.
By a technique pioneered by Krylov based on ``shaking the coefficients'' and mollification to construct smooth sub- and/or super-solutions,
\cite{K97, K00, BJ02, BJ05, BJ07} prove certain fractional convergence orders.
More encouragingly, it was remarked (Augoula and Abgrall \cite{augoula-abgrall-00})
that a weaker ``$\vare$-monotonicity'' property was sufficient for proving convergence towards the viscosity solution.

To make matters worse, in more than one dimension, 
in the presence of general cross-derivative terms even first order consistent  monotone schemes are necessarily ``non-local'', 
by which we mean that the length of the finite difference stencil grows relative to the mesh size as it is refined (see \cite{kocan95} or \cite{reisinger2016non}).
While standard finite difference schemes 
(see, e.g., \cite{kushner2013numerical}) are generally non-monotone unless the diffusion matrix is diagonally dominant (see \cite{crandalllions96}),
schemes which are monotone by construction include semi-Lagrangian schemes \cite{M89,CF95,deb-jak-12} and generalized finite difference schemes \cite{BZ03,BOZ04}.
{In order to utilize} the second-order accuracy of standard finite differences for smooth solutions, 
the authors of \cite{ma-for-2016} use those schemes in regions where the coefficients and controls are such that the scheme is monotone, 
and switches to wide stencils only if monotonicity of the standard scheme, easily checkable by the signs of the {discretization} matrix, fails.

Again contrasting this with the case of more regularity, 
for instance \cite{smears2016discontinuous} prove high order convergence 
of discontinuous finite element approximations under a Cordes condition on the coefficients which guarantees high order Sobolev regularity for smooth enough data.

The simple idea of ``filtered'' schemes is to use a combination of a high order scheme and a low order monotone scheme, 
where the latter is known to converge \emph{a priori} by standard results \cite{BS91}. 
The filter ensures that the low order scheme is used locally where and when the discrepancy to the high order scheme is too large,
thus ensuring at least the same convergence order as the low order scheme (see, e.g., \cite{BJ02, BJ05,   BJ07}), 
but otherwise uses the high order scheme and benefits from its accuracy for smooth solutions.
Such schemes have been proposed, analysed and used in
\cite{FroeOber13} for the Monge-Ampere equation, and in \cite{OberSalv15, BokaFalcSahu15} for first order Hamilton-Jacobi equations
(see also \cite{BokFalFerKalGruZid15} for convergence results  on non-monotone value iteration schemes for first order stationary equations).
We continue 
this program by studying (time-dependent) second order Hamilton-Jacobi-Bellmann equations as they arise from stochastic control problems.

Our results parallel the ones in \cite{BokaFalcSahu15} to prove, in the second order setting, that suitable filtered schemes converge at least of the same order as the underlying monotone scheme if there are solutions (only) in the viscosity sense and exhibit
higher order truncation error for sufficiently smooth solutions.

On one hand, the presence of the diffusion term  implies more regularity of the solution and therefore makes it possible to recover the high order behavior of the scheme (see Sections \ref{sec:test-MV} and \ref{sec:2d}). On the other hand, if compared with the results in \cite{BokaFalcSahu15}, the diffusion has a detrimental effect when the filter is activated to correct some pathological behavior of the high order scheme. In fact, in this case, the loss of accuracy does not remain localized and it diffuses in a neighborhood 
of the region of interest, as clearly shown by our  example in Section \ref{sec:UV}.

 Given the much more restrictive CFL condition on the time step in the second order case, we include implicit time stepping schemes in our analysis. This requires an extension of the arguments in \cite{BokaFalcSahu15} to work with  monotone implicit operators. The monotone schemes covered by the analysis include the most commonly used one-step finite difference and semi-Lagrangian schemes.
The theoretical results of the paper do not make use of any particular assumption on the high order scheme, 
beyond a higher order truncation error.
However, in the numerical examples we focus on backward differentiation formulae (BDF) of second order.
Although non-monotone, 
these schemes show good stability properties and have been recently used for solving
obstacle problems for parabolic differential equations
for American-style options in \cite{Oosterlee03} and \cite{bok-deb-16}.
In this framework, the filter has the role of ensuring the convergence without any important modification of the high order scheme.

The rest of this article is organized as follows. In Section~\ref{sec:def},
we present the general framework for the monotone scheme, the higher order scheme, and the filtered scheme,
and give examples of such schemes.
Section \ref{sec:results} is devoted to convergence results, as well as useful existence results for some implicit schemes.
Numerical examples mostly motivated by problems from mathematical finance are given in Section~\ref{sec:Tests},
and we conclude with some remarks in Section~\ref{sec:concl}.


\medskip

{\bf Acknowledgments.}
This work was partially supported by the program GNCS-INdAM.
We are grateful to Peter Forsyth for the provision of his code for Example 2 (Section \ref{sec:UV}).

\section{Main assumptions and definition of the scheme}\label{sec:def}


In order to simplify the presentation we will focus our analysis on the one-dimensional case:
\be\label{eq:1D}
   & & v_t + \sup_{a\in A} \left(-\fud \sigma^2(t,x,a) v_{xx}  + b(t,x,a) v_x  + f(t,x,a) v + \ell(t,x,a)\right)=0,
\ee
but the main analysis can be extended to higher dimensions.
Let $N\geq 1$ and let us introduce a time step 
$$
   \tau:=T/N.
$$
We denote by $\dx$ the space step. 
A uniform mesh in time and space is defined in one dimension by:
$$
t_n=n\tau,\quad n\in\{0,\ldots, N\}\quad \text{ and }\quad x_i\equiv i \dx\,\quad i\in \mathbb I\subseteq \Z.
$$
We also denote by $\mathcal G_{\Delta x}:=\{x_i: i\in{\mathbb I}\}$ the space grid.
The analysis can be adapted to nonuniform grids (see Section \ref{sec:UV}) and higher dimensions
(see Section \ref{sec:2d}) by interpreting $x_i$ as a general mesh point in a potentially non-uniform or higher-dimensional mesh,
and $\Delta x$ as the maximum mesh size. 

We will denote by $u=(u^n_i)$ the numerical approximation of the solution $v$, so that 
$$
u^n_i\approx v(t_n,x_i)
$$
and furthermore $u^n$ will denote the vector $(u^n_i)_{i\in\mathbb I} $.

We aim to define a high order convergent scheme  (high stands for greater than one) for the approximation of \eqref{eq:HJB}.
However, in order to be in the convergence framework of the theorem of Barles and Souganidis~\cite{BS91}, monotonicity of the scheme is fundamental, restricting the attainable order, as described in the introduction.
Hence, in order to devise our high order scheme, we consider the framework of Froese and Oberman~\cite{FroeOber13} 
using filtered schemes, which is a special form of $\vare$-monotone schemes.
Three main ingredients are needed:
a monotone scheme, a higher order scheme and a filter function. 

Let us consider the numerical approximation given by a monotone convergent (one-step) scheme written,
in abstract form, for $n=0,\dots,N-1$:
\be\label{eq:defUM}
   u^{n+1}_i :=S{_M}(u^n)_i, \quad \forall i\in\I,
\ee
with initialization 
\be\label{eq:u0init}
 u^0_i :=v_0(x_i), \quad \forall i\in \I.
\ee
Although here \eqref{eq:defUM} is written in explicit form, the scheme may be implicitly defined.

Analogously we consider a two-step high order scheme (high order consistent, but possibly neither monotone nor stable), 
for $n\geq 1$:
\be\label{eq:defUH}
   u^{n+1}_i=S_H(u^{n},u^{n-1})_i, \quad \forall i\in\I
\ee
(some particular definition of the scheme might be needed for $u^1$).
As above, the scheme written here in explicit form may also corresponds to an implicit scheme.

A more precise characterization of $S_M$ and $S_H$ will be given below.

We consider the following filter function as introduced in \cite{BokaFalcSahu15, OberSalv15}: 
\be \label{eq:filter}
F(x):=\left\{\begin{array}{ll}
x & \text{if } |x|\leq 1\\
0 & \text{otherwise}.
\end{array}\right.
\ee
Analogous theoretical results may be obtained using different filter functions such that $\|F\|_\infty \leq 1$ and $F(x)=x$ in a neighborhood of $x=0$, as in \cite{FroeOber13}.
The filtered scheme is then defined in the following form, for $n\geq 1$:
\be\label{eq:filterscheme}
  u^{n+1}_i=S{_F}(u^{n},u^{n-1})_i:=S{_M}(u^n)_i+ \vare \tau F\left(\frac{S{_H}(u^{n},u^{n-1})_i-S{_M}(u^n)_i}{\vare \tau}\right), \quad i\in\I,
\ee
where  $\vare=\vare_{\tau,\Delta x} > 0$ and such that 
$$
  \lim_{(\tau,\Delta x)\to 0} \vare_{\tau,\Delta x} =0.
$$
Specific choices of $\vare_{\tau,\Delta x}$ will be made precise later on.

Although the form of the filtered scheme \eqref{eq:filterscheme} is explicit, we emphasize again that 
the computation of $S_M(u^n)$ and of $S_H(u^n,u^{n-1})$ may require the
solution of implicit schemes.

\subsection{The monotone scheme}\label{sec:mono}

For convenience, the monotone scheme \eqref{eq:defUM} shall also be denoted in the following abstract form, for $n=0,\dots,N-1$:
\be\label{eq:schemeM}
   \ccS_{M}(t_{n+1},x_i,u^{n+1}_i,u)_i=0, \quad i\in \I, 
\ee
where $u$ denotes all the components $(u^k_\ell)$.
This formulation may include both explicit and implicit schemes.

For computational purposes it will be necessary to define our scheme on a bounded domain. 
Therefore from now on we consider
$$
 \mathbb I=\{1,\ldots, J\},
$$ 
which also means that the scheme \eqref{eq:schemeM} may take into account some boundary conditions.

We consider a particular family of schemes  $\ccS_{M}$ with the following form:
\begin{eqnarray}
\label{eq:form_of_scheme}
\ccS_M(t_{n+1},x_i,u^{n+1}_i,u)_i \equiv \frac{1}{\tau}\sup_{a\in A}\Big\{ M^{a,n+1} u^{n+1} - G^{a,n}(u^n) \Big\}_i
\end{eqnarray}
where $M^{a,n+1}\in \R^{J\times J}$ and $G^{a,n}(u^n)\in \R^J$.
More explicitly, 
for any $\varphi:[0,T]\times\R\to \R^J$, $\ccS_M$ can be written in the form 
$$
\ccS_M(t_{n+1},x_i,r, \varphi)_i = 
\frac{1}{\tau}\sup_{a\in A}\Big\{ 
     M^{a,n+1}_{ii} r + \sum_{\I\ni j\neq i} M^{a,n+1}_{ij} 
      \varphi(t_{n+1},x_j) - G^{a,n}(\varphi(t_n,.))_i \Big\}.
$$
If the scheme is defined in explicit form, i.e., $u^{n+1}_i= S_M (u^n)_i$, then it suffices to take $M^{a,n+1}=I_J$ 
(the identity matrix in $\R^{J\times J}$) 
and $S_M(u^n)_i := \inf_{a\in A} G^{n,a}(u^n)_i$.

For any $x\in \R^J$ and $A\in \R^{J\times J}$ we denote the usual vector and matrix supremum norm  by
$$
  \| x \|_\infty 
  := \sup_{i \in \I}
  |x_i|\qquad\text{and}\qquad \| A \|_\infty := \sup_{x\neq 0}\frac{\|A x\|_\infty}{\|x\|_\infty} = \sup_{i\in \I}
  \sum_{j\in \I} |A_{i,j}|.
$$
One can observe that in expression \eqref{eq:form_of_scheme} 
only the contribution of the $i$-th line of $M^{a,n+1}$ and $G^{a,n}$ appears. 
Therefore, denoting for any $a\equiv (a_1,\ldots,a_J)\in A^J$
\be\label{eq:extention}
  (M^{a,n})_{i,j} := (M^{a_i,n})_{i,j}\qquad \mbox{and}\qquad (G^{a,n}(u))_{i} := (G^{a_i,n}(u))_{i},
\ee
the scheme can also be written in the equivalent vector form:
\be
  \label{eq:form_of_scheme2}
  \sup_{a\in A^J}\Big\{ M^{a,n+1} u^{n+1} - G^{a,n}(u^n) \Big\}  =  0, \quad \mbox{in $\R^J$}.
\ee

\begin{rem}\label{rem:form_of_scheme} 
The form of the scheme \eqref{eq:form_of_scheme} is natural and will be satisfied by all the schemes considered in this paper.
They are of the ``discretize, then optimise'' type (see \cite{forsyth2007numerical}), 
where we discretize the linear operator in \eqref{eq:operator} by a monotone linear scheme for a fixed control
$a$ and then carry out the optimisation in \eqref{eq:HJB}.
\end{rem}

The following assumptions are considered on $(M^{a,n+1})$ and $(G^{a,n})$:

\medskip
\noindent
{\bf Assumption (A1):}
\begin{itemize}
  \item[$(i)$] For all $a\in A^J$ and $n\geq 1$,
  \be  \label{eq:A1-M-matrix}
      M^{a,n} \ \mbox{is an $M$-matrix}
  \ee
  ($A$ is said to be an $M$-matrix if $A_{ij}\leq 0$, $\forall i\neq j$, and  $A^{-1}\geq 0$ componentwise);
  \item[$(ii)$] For all $n\geq 1$ (and for all $\tau,\dx$),
  there exists a constant $C_n=C_n(\tau,\dx)\geq 0$ such that 
  \be  \label{eq:A1-M-bounded}
      \sup_{a \in A^J} \|M^{a,n}\|_\infty \leq C_n;
  \ee
  \item[$(iii)$] There exists $C>0$  (independent of $n,\tau,\dx$), such that for all $n\geq 1$, 
  \begin{equation}\label{eq:boundNorm}
  \sup_{a\in A^J} \| (M^{a,n})^{-1}\|_\infty\leq 1+C\tau;
  \end{equation}

  \item[$(iv)$] For all $a\in A^J$ and $n\geq 0$, $G^{n,a}$ is monotone increasing, i.e.,
    $\forall \varphi,\psi \in \R^J$:
  \begin{eqnarray}\label{eq:Gmono}
    \varphi\leq \psi  \qquad \Rightarrow \qquad G^{a,n}(\varphi) \le G^{a,n}(\psi)
  \end{eqnarray}
  (where here ``$\leq$'' denotes the componentwise inequality between vectors);

  \item[$(v)$] There exists $C>0$ (independent of $n,\tau,\dx$) such that for all $a \in A^J$, $n\geq 0$, 
  \be\label{eq:Glip}
    \forall \varphi,\psi \in \R^J,  \quad  
    \|G^{a,n}(\varphi) - G^{a,n}(\psi)\|_\infty  \le   (1+C\tau) \|\varphi-\psi\|_\infty,
  \ee
  and
  \be\label{eq:Gbound}
    \|G^{a,n}(0)\|_\infty  \leq   C\tau.
  \ee
\end{itemize}
Existence of solutions of the scheme \eqref{eq:form_of_scheme2} will be shown under assumption (A1)
(see Section~\ref{sec:results}, Proposition~\ref{lem:policy}).

\begin{rem}
Assumption (A1) is more specific than the basic requirements of monotonicity and stability in \cite{BS91}.
However, to show monotonicity for a given scheme for HJB equations, typically (A1)(i) and (iv) are used, and similarly
(A1)(ii), (iii), and (v) for stability. Hence, we consider these as natural assumptions for schemes of the form (\ref{eq:form_of_scheme}).
\end{rem}

Hereafter, we will denote by $C^{p,q}$ the set of functions that are continuously $p$-differentiable 
with repect to the time variable~$t$
and $q$-differentiable with respect to the space variable~$x$, and will also denote $\varphi_{pt}$  
or $\varphi_{qx}$ the corresponding partial derivatives.
For any function $\varphi\in C^{1,2}$, the consistency error of the scheme $\ccS_M$ is defined, for a given $(t,x)$, by:
\be
  & & \cE^\varphi_{\ccS_M}(\tau,\Delta x):= 
  \left|\ccS_{M}(t+\tau,x,\varphi(t+\tau,x),\varphi)-\Big(\varphi_t(t,x)+H(t,x,\varphi,\varphi_x,\varphi_{xx})\Big)\right|
  \nonumber
 \\
  & & \label{eq:EM}
\ee

The following natural consistency property of the scheme 
will be needed.

\medskip

\noindent
{\bf Assumption (A2) [consistency]:}
\begin{itemize}
\item[]
For any $(t,x)$, for any function $\varphi$ sufficiently regular in a neighbourhood of $(t,x)$:
\be\label{eq:truncerror0}
  \lim_{(\tau,\dx,\xi)\to 0} \cE^{\varphi+\xi}_{\ccS_M}(\tau,\dx) = 0.
\ee
\end{itemize}

Notice that the consistency property (A2), introduced for simplicity,
implies the weaker consistency condition of Barles and Souganidis~\cite{BS91} 
in the interior of the domain.

As recalled before, a monotone scheme has a limited order of convergence. 
More precisely, in our setting, we will typically assume to have first order consistency:

\medskip

\noindent
{\bf Assumption (A2') [first-order consistency]:}
\begin{itemize}
\item[]
There exists $q\in\{1,2\}$ such that,  for any function $\varphi \in C^{2,2+q}$:
\be \label{eq:truncerror}
  \big|\mathcal E^\varphi_{\ccS_M}(\tau,\Delta x)\big|\leq C_{M}\ 
   \bigg(\|\varphi_{tt}\|_\infty + \sum_{2\leq k\leq 2+q}\|\varphi_{kx}\|_\infty\bigg) \max(\tau,\dx)
\ee
as $(\tau,\dx)\conv 0$, where $C_M\geq 0$ is a constant independent of $\varphi$, $\tau$ and $\dx$.

We will eventually assume also that the scheme has the same order of convergence in $\tau$ and $\dx$, in order to obtain the bound~\eqref{eq:truncerror}
(see the case of the semi-Lagrangian scheme below).
\end{itemize}

The last inequality reads
$|\mathcal E^\varphi_{\ccS_M}(\tau,\dx)|\leq C^\varphi_{M}\,  \max(\tau,\dx)$ for some constant $C^\varphi_M$ that can be bounded explicitly.

Furthermore, it is easily verified that (A2') $\Rightarrow$ (A2), so that we will focus on (A2').

 
\subsubsection{Two examples of monotone schemes}

We now consider two types of monotone schemes. We focus again on the one-dimensional case~\eqref{eq:1D}.
The first type is the 
Implicit Euler (IE) finite difference scheme, defined as follows:

\begin{taggedtheorem}{Implicit Euler (IE) Scheme}\label{ex:IE} 
For $n\geq 0$ the  scheme is defined by:
\be
  \label{eq:IE}
  & & \frac{u^{n+1}_i - u^{n}_i}{\tau} +\ \sup_{a\in A} \Big\{-\frac{1}{2} \sigma^2(t_{n+1},x_i,a) D^2 u^{n+1}_i
      + b^+(t_{n+1},x_i,a) D^{1,-} u^{n+1}_i    \\
  & & \hspace{2cm}   - \; b^-(t_{n+1},x_i,a) D^{1,+} u^{n+1}_i 
    + f(t_{n+1},x_i,a) u^{n+1}_i + \ell(t_{n+1},x_i,a) \Big\}  \  =  \ 0,
  \nonumber
\ee
where we have denoted 
\be\label{eq:d2v}
 D^2 v_i:=\frac{v_{i-1} - 2 v_i + v_{i+1}}{\dx^2},
\ee
\be \label{eq:d1v}
  D^{1,-} v_i:= \frac{v_{i} - v_{i-1}}{\Delta x}
  \quad \mbox{and} \quad 
  D^{1,+} v_i:= \frac{v_{i+1} - v_{i}}{\Delta x}, 
\ee
and where we have used the decomposition $b=b^+-b^-$ with
\be \label{eq:b-decomposition}
  b^\pm(t,x,a):=\max( \pm b(t,x,a),0).
\ee
\end{taggedtheorem}

The scheme \eqref{eq:IE} can also be written in the equivalent form \eqref{eq:form_of_scheme2} with $M^{a,n+1}$ the tridiagonal matrix such that
\be
 & & M^{a,n+1}_{i,i} :=1+\frac{\tau}{\dx^2}\ms^2(t_{n+1},x_i,a) + \frac{\tau}{\dx}|b(t_{n+1},x_i,a)| +   \tau f(t_{n+1},x_i,a) \\
 & & M^{a,n+1}_{i,i\pm 1} := - \frac{1}{2}\frac{\tau}{\dx^2}\ms^2(t_{n+1},x_i,a) - \frac{\tau}{\dx} b_{\pm}(t_{n+1},x_i,a),
\ee
and with $G^{a,n}$ defined by
\be
  G^{a,n}(u^n)_i := u^n_i  - \tau \ell(t_{n+1},x_i,a).
\ee 

We summarize here some basic results concerning the IE scheme:

\begin{prop}
Assume that $\sigma,b,f,\ell$ are bounded functions. The (IE) scheme satisfies assumptions (A1) and (A2'), with $q=1$.
\end{prop}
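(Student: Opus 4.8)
The plan is to verify assumptions (A1) and (A2') directly for the Implicit Euler scheme using the explicit expressions for $M^{a,n+1}$ and $G^{a,n}$ given just before the statement. For (A1), I would proceed item by item. To show $(i)$, that $M^{a,n}$ is an $M$-matrix, I first note from the formulae that the off-diagonal entries $M^{a,n+1}_{i,i\pm 1} = -\frac{1}{2}\frac{\tau}{\dx^2}\ms^2 - \frac{\tau}{\dx}b_\pm$ are nonpositive (since $\ms^2\geq 0$ and $b_\pm\geq 0$), which is one defining condition. For $A^{-1}\geq 0$, the cleanest route is to show $M^{a,n}$ is strictly (or weakly) diagonally dominant with positive diagonal: summing the absolute off-diagonal entries in row $i$ gives $\frac{\tau}{\dx^2}\ms^2 + \frac{\tau}{\dx}(b_+ + b_-) = \frac{\tau}{\dx^2}\ms^2 + \frac{\tau}{\dx}|b|$, while the diagonal is $1 + \frac{\tau}{\dx^2}\ms^2 + \frac{\tau}{\dx}|b| + \tau f$; hence the diagonal exceeds the off-diagonal row sum by $1 + \tau f$. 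Under boundedness of $f$ and for $\tau$ small this is positive, giving a diagonally dominant Z-matrix (a matrix with nonpositive off-diagonals is an $M$-matrix iff it is, e.g., a diagonally dominant Z-matrix with positive diagonal), so $M^{a,n}$ is an $M$-matrix. Care is needed at the boundary rows $i=1$ and $i=J$, where one off-diagonal neighbour falls outside $\I$; there the row sum is only smaller, so diagonal dominance is preserved.

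For $(ii)$, the bound $\sup_a\|M^{a,n}\|_\infty\leq C_n$ is immediate from the explicit entries and the boundedness of $\ms,b,f$: the row sum is at most $1 + 2\frac{\tau}{\dx^2}\|\ms^2\|_\infty + 2\frac{\tau}{\dx}\|b\|_\infty + \tau\|f\|_\infty$, so I may take $C_n$ depending on $\tau,\dx$ as allowed. For $(iii)$, the estimate $\|(M^{a,n})^{-1}\|_\infty\leq 1+C\tau$ is the key quantitative step. Here I would exploit the $M$-matrix structure together with diagonal dominance: writing $M = D - B$ where $D$ is the diagonal and $B\geq 0$ collects the (negated) off-diagonals, the row sums of $M$ satisfy $\sum_j M_{ij} = 1 + \tau f(t_{n+1},x_i,a)\geq 1 - \tau\|f\|_\infty$. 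A standard argument for $M$-matrices bounds $\|M^{-1}\|_\infty$ by $1/\min_i\sum_j M_{ij}$ when the row sums are positive and $M^{-1}\geq 0$: indeed, if $w$ solves $Mw = \mathbf 1$ then $w\geq 0$ and $w_i\leq (\min_i\sum_j M_{ij})^{-1}$, which equals $(1-\tau\|f\|_\infty)^{-1}\leq 1 + C\tau$ for small $\tau$; since $M^{-1}\geq 0$, $\|M^{-1}\|_\infty = \|M^{-1}\mathbf 1\|_\infty = \|w\|_\infty$, yielding the claim. I expect this $M$-matrix estimate to be the main obstacle, as it requires carefully handling the boundary rows (where $\sum_j M_{ij}$ may be even larger, hence harmless) and the sign of $f$.

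Items $(iv)$ and $(v)$ concern $G^{a,n}(u^n)_i = u^n_i - \tau\ell(t_{n+1},x_i,a)$, which is affine in $u^n$ with identity linear part. Monotonicity $(iv)$ is trivial since $\varphi\leq\psi$ implies $G^{a,n}(\varphi) = \varphi - \tau\ell \leq \psi - \tau\ell = G^{a,n}(\psi)$. For $(v)$, the Lipschitz bound is in fact an equality $\|G^{a,n}(\varphi)-G^{a,n}(\psi)\|_\infty = \|\varphi-\psi\|_\infty$, so the constant is $1\leq 1+C\tau$; and $\|G^{a,n}(0)\|_\infty = \tau\|\ell\|_\infty\leq C\tau$ follows from boundedness of $\ell$, giving $(v)$.

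Finally, for (A2') with $q=1$, I would insert a smooth test function $\varphi\in C^{2,3}$ into the consistency error $\cE^\varphi_{\ccS_M}$ defined in \eqref{eq:EM} and Taylor expand. The time difference quotient $(\varphi(t+\tau,x)-\varphi(t,x))/\tau$ approximates $\varphi_t(t,x)$ with error $O(\tau)$ controlled by $\|\varphi_{tt}\|_\infty$ (evaluating the spatial operator at $t+\tau$ rather than $t$ contributes a further $O(\tau)$ term via the Hölder/Lipschitz regularity of the coefficients and smoothness of $\varphi$). The central second difference $D^2\varphi_i$ approximates $\varphi_{xx}$ with error $O(\dx^2)$ bounded by $\|\varphi_{4x}\|_\infty$, but the upwind first differences $D^{1,\pm}\varphi_i$ are only first order, contributing an $O(\dx)$ error bounded by $\|\varphi_{2x}\|_\infty$ through the $b^\pm$ terms; this is precisely why $q=1$ (one needs up to the third spatial derivative, i.e. $2+q = 3$). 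Since the supremum over $a\in A$ of consistent linear operators is consistent, and $|\sup_a X_a - \sup_a Y_a|\leq \sup_a|X_a - Y_a|$, the error bound passes through the supremum uniformly using boundedness \eqref{eq:A0} of the coefficients, yielding \eqref{eq:truncerror} with a constant $C_M$ independent of $\varphi,\tau,\dx$. This completes the verification.
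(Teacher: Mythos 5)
Your proposal is correct, and it fills in all the details that the paper compresses into one line. The only step the paper actually spells out is (A1)$(iii)$, and there the two arguments differ: the paper invokes the bound $\|M^{-1}\|_\infty\leq 1/\delta$ valid for any matrix with diagonal dominance margin $\delta$ (here $\delta = 1+\tau f\geq 1-\tau\|f\|_\infty$), a purely norm-based estimate that never uses the sign structure of $M^{-1}$; you instead first establish $M^{-1}\geq 0$ via the Z-matrix/diagonal-dominance criterion and then run a comparison argument against the constant vector $(1/s)\mathbf 1$ with $s=\min_i\sum_j M_{ij}$, using $\|M^{-1}\|_\infty=\|M^{-1}\mathbf 1\|_\infty$ for nonnegative inverses. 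Both routes yield the same bound $(1-\tau\|f\|_\infty)^{-1}\leq 1+C\tau$; the paper's lemma is more self-contained (it would apply even without the M-matrix property), while yours reuses the positivity you must prove anyway for (A1)$(i)$ and is the argument that generalizes to maximum-principle-type estimates. Your handling of the boundary rows and of (A1)$(ii)$, $(iv)$, $(v)$ matches what the paper dismisses as ``immediate or classical.'' One small bookkeeping slip in your (A2') discussion: under the regularity $C^{2,2+q}$ with $q=1$ you cannot invoke $\|\varphi_{4x}\|_\infty$ for the central second difference; you should bound its error by $O(\dx\,\|\varphi_{3x}\|_\infty)$ instead, and it is this term --- not the upwind first differences, whose error is controlled by $\|\varphi_{2x}\|_\infty$ --- that forces $2+q=3$. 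This does not affect the validity of the estimate \eqref{eq:truncerror}, since all error contributions are still dominated by $\big(\|\varphi_{tt}\|_\infty+\|\varphi_{2x}\|_\infty+\|\varphi_{3x}\|_\infty\big)\max(\tau,\dx)$.
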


\begin{proof} We note that for diagonally dominant matrices, i.e.\ such that $\forall i,$ $|M_{ii}|\geq \delta + \sum_{j\neq i}|M_{ij}|$ 
for some $\delta >0$,
it holds $\|M^{-1}\|_\infty\leq 1/\delta$. From this follows the estimate (A1)($iii$). Other properties are immediate or classical.
\end{proof}

\begin{rem}\label{rem:existenceIE}
Existence and uniqueness results for such implicit schemes will be stated in Section \ref{sec:results},
using monotonicity properties of the matrix $M^{a,n}$.
We can also solve \eqref{eq:IE} efficiently by the policy iteration algorithm
(see \cite{bok-mar-zid-09}).
\end{rem}

We recall that, in multiple dimensions, standard finite difference schemes are in general non-monotone.
In~\cite{BZ03,BOZ04} it is shown how to get monotone schemes 
for second order equations with general diffusion matrices, but also at the cost of a wider stencil as well as limited order of consistency.

\medskip

As an alternative to finite difference schemes,
simple and explicit monotone schemes known as semi-Lagrangian (SL) schemes \cite{M89,CF95,deb-jak-12} can be considered.
They are based on a discrete time approximation 
of the Dynamic Programming Principle satisfied by the exact solution, combined with a spatial grid interpolation.

In the one-dimensional case of \eqref{eq:1D},
this leads to 
the following approximation, for $n\geq 0$:
\be 
  & &  
    u^{n+1}_i = \inf_{a\in A}
        \bigg\{\frac{1}{2} \sum_{\eps=\pm 1} \big[u^n\big]\big( x_i - \tau b(t_n,x_i,a) + \eps \sqrt{\tau} \ms(t_n,x_i,a)\big) 
    \nonumber \\
  & & \hspace{6cm}
       - \tau f(t_n,x_i,a) u^n_i - \tau \ell(t_n,x_i,a)\bigg\},
    \label{eq:SL2}
\ee
where $[\ \cdot\ ]$ stands for a monotone linear interpolation operator on the spatial grid.

A straightforward equivalent of \eqref{eq:SL2},
which shows the similarity with the finite difference scheme (\ref{eq:IE}) and is convenient for computing the truncation error,
 is then:
\begin{taggedtheorem}{Semi-Lagrangian (SL) Scheme}\label{ex:SL} 
For $n\geq 0$ the scheme is defined by:
\be 
 & & \frac{u^{n+1}_i - u^n_i}{\tau}  
  +  \underset{a\in A}\sup \bigg\{
     - \frac{1}{2\tau} \bigg(\sum_{\eps=\pm 1} \big[u^n\big]\big( x_i - \tau b(t_n,x_i,a) + \eps \sqrt{\tau} \ms(t_n,x_i,a)\big) -  2 u^n_i\bigg)  \nonumber\\
 & & \hspace{4.5cm} \phantom{ +  \underset{a\in A}\sup\bigg\{ }
     + f(t_n,x_i,a) u^n_i+\ell(t_n,x_i,a)\bigg\} \ = \ 0.
  \label{eq:SL1}
\ee
\end{taggedtheorem}

As this is an explicit scheme, we can choose $M^{a,n+1}:= I_J$ (the identity matrix in $\R^{J\times J}$) 
and $G^{a,n}(u^n)_i$ defined as the right-hand-side of \eqref{eq:SL2}.
A two-dimensional version will be presented on a numerical example in Section \ref{sec:2d}.

\begin{prop}\label{lem:SL1}
Assume that $\ms,b,f,\ell$ are bounded functions. The (SL) scheme satisfies assumption (A1).
Assumption (A2') is satisfied with $q=2$ and for $\tau$ and $\dx$ of the same order.
\end{prop}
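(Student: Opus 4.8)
The plan is to treat the two assertions separately, exploiting that the (SL) scheme is explicit, so that $M^{a,n+1}=I_J$ and $G^{a,n}(u^n)_i$ equals the right-hand side of \eqref{eq:SL2}. With $M^{a,n+1}=I_J$, properties (A1)$(i)$--$(iii)$ are immediate: the identity is trivially an $M$-matrix, and $\|I_J\|_\infty=\|I_J^{-1}\|_\infty=1$, so \eqref{eq:A1-M-bounded} and \eqref{eq:boundNorm} hold with $C=0$. The content of (A1) is therefore carried by $(iv)$ and $(v)$, which concern $G^{a,n}$.

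For these I would write the monotone linear interpolation as a convex combination $[\varphi](y)=\sum_j\lambda_j(y)\varphi_j$ with $\lambda_j(y)\geq 0$ and $\sum_j\lambda_j(y)=1$, so that $G^{a,n}$ is affine, $G^{a,n}(\varphi)_i=\tfrac12\sum_{\eps=\pm1}\sum_j\lambda_j(y^\eps_i)\varphi_j-\tau f_i\varphi_i-\tau\ell_i$, where $y^\eps_i:=x_i-\tau b(t_n,x_i,a)+\eps\sqrt\tau\,\ms(t_n,x_i,a)$. Monotonicity $(iv)$ follows because every off-diagonal coefficient $\tfrac12\sum_\eps\lambda_j(y^\eps_i)$ is nonnegative; the only term needing care is the diagonal one $\tfrac12\sum_\eps\lambda_i(y^\eps_i)-\tau f_i$, which stays nonnegative under a mild smallness condition on $\tau f_i$ (automatic when $f\leq 0$). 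For $(v)$, since $\tfrac12\sum_\eps\sum_j\lambda_j(y^\eps_i)=1$, the triangle inequality gives an operator norm (in $\|\cdot\|_\infty$) of the affine part bounded by $1+\tau\|f\|_\infty\leq 1+C\tau$, which yields \eqref{eq:Glip} unconditionally, while $\|G^{a,n}(0)\|_\infty=\tau\|\ell\|_\infty\leq C\tau$ gives \eqref{eq:Gbound}.

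The core of the proposition is the consistency estimate (A2'). I would fix $(t,x)$, take $\varphi\in C^{2,4}$, and expand. Writing $\Delta_\eps:=-\tau b+\eps\sqrt\tau\,\ms$, a Taylor expansion of $\varphi(t,\cdot)$ at $x$ together with the cancellation of the odd powers of $\eps$ under $\sum_{\eps=\pm1}$ gives $\tfrac12\sum_\eps\varphi(t,x+\Delta_\eps)=\varphi(t,x)-\tau b\varphi_x+\tfrac12\tau\ms^2\varphi_{xx}+R$, where the remainder $R$ is of order $\tau^2$ and involves derivatives up to $\varphi_{xxxx}$. Replacing the exact evaluation by the interpolation adds an error controlled by $C\dx^2\|\varphi_{xx}\|_\infty$ (linear interpolation), and expanding the time increment gives $\varphi(t+\tau,x)=\varphi(t,x)+\tau\varphi_t+O(\tau^2\|\varphi_{tt}\|_\infty)$. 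Subtracting, dividing by $\tau$, and taking $\sup_{a\in A}$ (noting $\varphi_t$ is control-independent) reproduces $\varphi_t+H(t,x,\varphi,\varphi_x,\varphi_{xx})$ up to an error of size $O(\tau)+O(\dx^2/\tau)$.

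The main obstacle is precisely this last error term. Since the diffusion is reconstructed only through the $O(\sqrt\tau)$ displacement, the interpolation error $O(\dx^2)$ gets divided by $\tau$, producing the $\dx^2/\tau$ contribution; this is what forces the regime where $\tau$ and $\dx$ are of the same order, under which $\dx^2/\tau=O(\dx)=O(\max(\tau,\dx))$ and the whole consistency error becomes $O(\max(\tau,\dx))$. Tracking which derivatives enter then pins down $q$: the $O(\tau)$ part coming from $R$ carries $\|\varphi_{xx}\|_\infty$, $\|\varphi_{xxx}\|_\infty$ and $\|\varphi_{xxxx}\|_\infty$ (the fourth-order derivative being the highest that survives in the $O(\tau^2)$ Taylor remainder after the $\eps$-symmetrization), the time expansion carries $\|\varphi_{tt}\|_\infty$, and the interpolation error carries $\|\varphi_{xx}\|_\infty$; hence the bound \eqref{eq:truncerror} holds with $q=2$ and regularity $C^{2,4}$, which is the reason the statement requires $q=2$ rather than $q=1$.
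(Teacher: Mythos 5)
Your proof is correct and, on the consistency assertion (A2'), takes essentially the same route as the paper: the paper's proof consists exactly of the interpolation bound $\|\varphi-[\varphi]\|_\infty\leq \frac{1}{8}\|\varphi_{xx}\|_\infty\dx^2$, the resulting estimate $|\cE^\varphi_{\ccS_M}|\leq C\big(\tau+\dx^2/\tau\big)$, and the conclusion for $\tau$ and $\dx$ of the same order; your symmetrized Taylor expansion, with the $O(\tau)$ remainder carrying $\varphi_{xx},\varphi_{3x},\varphi_{4x}$ (hence $q=2$ and regularity $C^{2,4}$), is precisely the computation the paper leaves implicit. Your verification of (A1) goes beyond the paper, which offers no argument for that part of the statement.

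One caveat concerns your parenthetical in the discussion of (A1)$(iv)$. The claim that the diagonal coefficient $\frac12\sum_{\eps}\lambda_i(y_i^\eps)-\tau f_i$ stays nonnegative ``under a mild smallness condition on $\tau f_i$'' fails in the very regime the proposition addresses: when $\ms(t_n,x_i,a)\neq 0$ and $\tau\sim\dx$, the foot points $y_i^\eps$ lie at distance $\approx\sqrt{\tau}\,|\ms|\gg\dx$ from $x_i$, so for a linear interpolation the weight $\lambda_i(y_i^\eps)$ vanishes and the coefficient of $u^n_i$ is exactly $-\tau f_i$, which is negative whenever $f_i>0$, no matter how small $\tau$ is. Monotonicity of $G^{a,n}$ as written therefore genuinely requires $f\leq 0$ (or a different treatment of the zeroth-order term, e.g.\ taking it implicitly). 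Since the paper asserts (A1) for the SL scheme without proof, this is a lacuna in the statement itself rather than a defect of your consistency argument, but the parenthetical as you state it is not correct.
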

\begin{proof}
For data $\varphi\in C^2$, the interpolation error satisfies 
$\|\varphi - [\varphi]\|_\infty\leq \frac{1}{8} \|\varphi_{xx}\|_\infty \dx^2$, 
and therefore it is easy to see that the consistency error satisfies the following bound:
$$
  \big|\cE^\varphi_{\ccS_M}(\tau,k,\Delta x)\big| \leq C \left(\tau + \frac{\Delta x^2}{\tau}\right).
$$
\COMMENTED{
\footnote{
More precisely, denoting $b=b(t,x,a)$, $\ms=\ms(t,x,a)$ and $\bar x= x - b\tau$, we use the estimate:
\be
\nonumber 
   & & \hspace{-1cm}\sum_{\eps=\pm 1}\frac{1}{2\tau} u(\bar x +\ms\sqrt{\tau}) \\
\nonumber 
   & & =\ \frac{1}{2\tau} \bigg(u(\bar x +\ms\sqrt{\tau}) + u(\bar x - \ms\sqrt{\tau}) - 2u(\bar x)\bigg)   
    + \frac{1}{\tau} (u(\bar x) - u(x)) \\
\nonumber 
   & & =\  \fud \ms^2 u_{xx}(\bar x) + O(\ms^4 \|u_{4x} \|_\infty\tau) -  b u_x(x) + O(b^2 \|u_{xx} \|_\infty \tau)  \\
   & & =\  \fud \ms^2 u_{xx}(x) - b u_x(x) +  
      O\bigg( (\ms^4\|u_{4x} \|_\infty + \ms^2 |b| \| u_{3x}\|_\infty + b^2 \|u_{xx} \|_\infty)\ \tau \bigg).
\ee
}
}
Then for $\dx\equiv \tau$, the desired consistency estimate is obtained.
\end{proof}

Notice that for the exact solution, in general, only Lipschitz spatial regularity holds and precise error estimates are of the order of
$
   O(\tau^{1/4}) + O(\frac{\dx}{\tau}), 
$
where $O(\dx)$ is the interpolation error for Lipschitz regular data, see \cite{deb-jak-12}
(see also \cite{ass-bok-zid-2015} for the case of unbounded data).


We refer to \cite{M89,CF95} for the introduction of SL schemes 
in the context of second order equations and to \cite{deb-jak-12} for an exhaustive discussion and main results.

\subsection{The high order scheme}\label{sec:high}
The high order scheme \eqref{eq:defUH} will be written in the following form, for $n=0,\dots,N-1$:
\be\label{eq:schemeH}
 \ccS{_H}(t_{n+1},x,u^{n+1}_i,u)_i=0, \qquad i\in \mathbb I
\ee
with an initialization of $u^0$ as in \eqref{eq:u0init} and, possibly, a particular definition of $u^1$ to handle the case of two-step schemes (general multi-step schemes can be handled similarly).


We want to make minimal assumptions on the high order scheme to allow flexibility for obtaining the high order
(in particular, we do not assume monotonicity).
As a minimum, we require that the scheme is well-defined, i.e., that (\ref{eq:schemeH}) uniquely determines $u^{n+1}$, such that we can write a general two-step  scheme in explicit form
\beno
u^{n+1} = S_H(u^n,u^{n-1}).
\eeno
We consider the following assumption:
\medskip

\noindent
{\bf Assumption (A3) [high order consistency]:}
\begin{itemize}
\item[]
There exist $k\geq 2$ and $q\in \N$ such that,
for any function $\varphi$ with regularity
$C^{1+k,2+q}$ in the neighborhood of some point $(t,x)$
and such that 
$$ 
  \varphi_t(.,.)+H(.,.,\varphi,\varphi_x,\varphi_{xx})=0
$$
in a neighborhood of $(t,x)$,
one has 
\be\label{eq:consistencyExpl}
 & & \hspace{-2cm} \left|\frac{1}{\tau}\Big(\varphi(t+\tau,x)-S_H(\varphi(t,\cdot),\varphi(t-\tau,\cdot))(x)\Big)
 \right| \nonumber \\
 & &  \leq C_{H,k} \bigg(
       \|\varphi_{(1+k)t}\|_\infty 
       + \sum_{2\leq p\leq 2+q} \|\varphi_{px}\|_\infty 
     \bigg)\ \max(\tau^k, \dx^k)
\ee
for some constant  $C_{H,k}\geq 0$ that is independent of $\varphi$,$\tau$,$\dx$.
\end{itemize}

\begin{rem}
\label{rem:ho}
We point out that for the high order schemes we present here, assumption (A3) is satisfied 
if 

(i) there exist $k\geq 2$, $\ma\in [0,1]$ and $q\in \N$ such that,
\be
  \cE^{\varphi}_{\ccS_H}(\tau,\Delta x)  
   & :=   & 
   \left|\ccS_H(t+\tau,x,\varphi(t+\tau,x),\varphi)
         -\Big(\varphi_t(t+\ma\tau,x)+H(t+\ma\tau,x,\varphi,\varphi_x,\varphi_{xx})\Big)\right| 
   \nonumber
  \\ 
  \label{eq:high-const-CH-bound}
   & \leq & C^\varphi_{H,k}\ \max(\tau^k, \dx^k)
\ee
for some constant  $C^{\varphi}_{H,k}\geq 0$ independent of $\tau$, $\dx$, 

(ii)
the high order scheme is also of the form \eqref{eq:form_of_scheme}, i.e.,
\beno
\ccS_H(t_{n+1},x_i,u^{n+1}_i,u)_i \equiv \frac{1}{\tau}\sup_{a\in A}\Big\{ 
     {\widetilde M}^{a,n+1} u^{n+1} - {\widetilde G}^{a,n}(u^n,u^{n-1}) \Big\}_i
\eeno
(where ${\widetilde M}^{a,n+1}\in \R^{J\times J}$ and ${\widetilde G}^{a,n}(u^n,u^{n-1})\in \R^J$)

(iii) 
   $\sup_{a\in A^J}\|(\widetilde M^{a,n+1})^{-1}\|_{\infty}$ is bounded by a constant independent of $\tau$ and $\dx$.
\end{rem}



We could have asked for consistency error of orders $O(\tau^k + \dx^{k'})$ which are different in time and space. 
However, in this work we will use the same order of consistency in $\tau$ and $\dx$.

\subsubsection{Examples of high order schemes}

Different choices for the high order scheme are possible. 
In this paper we consider mainly second order schemes ($k=2$) based on finite differences
(see Lemma~\ref{lem:high-FD}).

We first focus on a particular second order Backward Difference Formula (BDF2) 
both in time and space as follows (here for the one-dimensional case),
and then discuss
the Crank-Nicolson (CN) scheme, also of second order.

\begin{rem}
Note that (A3) will hold for the BDF2 scheme using $\ma=1$ in Remark \ref{rem:ho}, and for the CN scheme by using $\ma=\fud$,  
which justifies the introduction of this parameter $\ma$. 
\end{rem}

\begin{taggedtheorem}{BDF2 Scheme}
For $n\geq 1$, the scheme is defined by:
\begin{equation}\label{eq:BDF2}
\begin{split}
  \frac{3 u^{n+1}_i - 4u^{n}_i + u^{n-1}_{i}}{2\tau} +\ \sup_{a\in A} \Big\{-\frac{1}{2} \sigma^2(t_{n+1},x_i,a) D^2 u^{n+1}_i
      + b^+(t_{n+1},x_i,a) D^{1,-} u^{n+1}_i & \\
    - \; b ^-(t_{n+1},x_i,a) D^{1,+} u^{n+1}_i 
        + f(t_{n+1},x_i,a) u^{n+1}_i + \ell(t_{n+1},x_i,a) \Big\} & \ = \ 0,
\end{split} 
\end{equation}
where  $D^2 u_i$ corresponds to the usual second order approximation~\eqref{eq:d2v}, 
$b^\pm$ denote the positive (resp. negative) part of $b$ as in \eqref{eq:b-decomposition},
and  a BDF2 approximation (with stencil shifted left or right) is used for the first derivative in space:
\be\label{eq:spaceBDF}
  D^{1,-} v_i:= \frac{3 v_i - 4 v_{i-1} + v_{i-2}}{2\dx}
  \quad \mbox{and} \quad 
  D^{1,+} v_i:= -\bigg(\frac{3 v_i - 4 v_{i+1} + v_{i+2}}{2\dx}\bigg). 
\ee
The first time step $(n=0)$ needs some special treatment and in this case we consider the implicit Euler scheme with the
same spatial BDF2 discretization as in \eqref{eq:BDF2}, i.e., 
the time approximation term $(3 u^{n+1}_i - 4u^{n}_i + u^{n-1}_{i})/(2\tau)$ is replaced by $(u^{n+1}_i - u^{n}_i)/\tau$.
\end{taggedtheorem}

\begin{rem}
Of course, considering the equation in a bounded domain, some modification of the scheme might also be  necessary at the boundary.
\end{rem}

\begin{rem}
The particular treatment of the first order drift term $b(t,x,a) v_x$  is in order to take into account possibly vanishing diffusion terms. 
Indeed, this approximation leads to a second order consistent scheme in both time and space, and appears to be stable 
even when the diffusion term vanishes, $\ms(t,x_i,a)\equiv 0$, as in part of the domain in Example 1, Section \ref{sec:test-MV}. 
The scheme can be solved hereafter in combination with a policy iteration algorithm.
It is also well known that the simple central approximation for $v_x$ (i.e., $(u^{n+1}_{i+1}-u^{n+1}_{i-1})/(2\dx)$)
should be avoided in situations where the diffusion term vanishes.
\end{rem}


To the best of our knowledge, the use of the second order approximations \eqref{eq:spaceBDF} in \eqref{eq:BDF2} is new.
It avoids to switch to a first order backward or forward approximation as in \cite{ma-for-2016}.


\begin{rem}\label{rem:existenceBDF}
We can attempt to solve \eqref{eq:BDF2} by policy iteration. This is done in the numerical section, with no problem encountered.
However, in presence of a non-vanishing drift term $b$, no theoretical results are available at the moment for 
justifying the existence of a solution for this BDF2 scheme.
In particular, one can easily observe that if the finite discretization matrix in front of $u^{n+1}$ is not an M-matrix, 
results such as in \cite{bok-mar-zid-09} do not apply.
\end{rem}


For comparison purposes, and because of its popularity for applications in financial mathematics and engineering, the classical Crank-Nicolson (CN) scheme
will also be tested on Example 2, Section~\ref{sec:UV}.
The precise definition of the scheme used is given in \cite{pooleyetal}.


The following high order consistencies hold, the proof of which is omitted.

\begin{lem}\label{lem:high-FD}
The (BDF2) and the (CN) scheme both satisfy the high-order consistency condition (A3) with $k=2$ and $q = 2$.
\end{lem}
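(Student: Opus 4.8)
The plan is to verify the high-order consistency condition (A3) by appealing to the sufficient conditions stated in Remark~\ref{rem:ho}, so that it suffices to establish the truncation error bound \eqref{eq:high-const-CH-bound} with $k=2$ and to check that the implicit operators $\widetilde M^{a,n+1}$ have inverses bounded uniformly in $\tau,\dx$. For the BDF2 scheme I would take $\ma=1$ (evaluating the frozen operator at $t+\tau=t_{n+1}$), and for the Crank-Nicolson scheme $\ma=\fud$, as indicated in the remark preceding the BDF2 definition. The core of the work is then a collection of standard Taylor-expansion estimates, carried out term by term, for a fixed control $a\in A$ (the $\sup_{a\in A}$ plays no role in consistency since the bound is uniform in $a$ under assumption \eqref{eq:A0}).

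First I would handle the time-discretization error. For BDF2, a Taylor expansion of $\varphi$ in time about $t_{n+1}$ gives
\[
  \frac{3\varphi(t_{n+1},x)-4\varphi(t_n,x)+\varphi(t_{n-1},x)}{2\tau}
  = \varphi_t(t_{n+1},x) + O\big(\tau^2\|\varphi_{3t}\|_\infty\big),
\]
which is the known second-order accuracy of the BDF2 stencil and accounts for the $\|\varphi_{(1+k)t}\|_\infty=\|\varphi_{3t}\|_\infty$ contribution with $k=2$; for Crank-Nicolson the analogous midpoint expansion about $t+\fud\tau$ again yields an $O(\tau^2)$ error. Next I would treat the spatial operators evaluated at $t_{n+1}$: the standard central second difference satisfies $D^2\varphi(t_{n+1},x_i)=\varphi_{xx}(t_{n+1},x_i)+O(\dx^2\|\varphi_{4x}\|_\infty)$, and the one-sided three-point BDF2 first-derivative stencils in \eqref{eq:spaceBDF} are likewise second-order accurate, $D^{1,\pm}\varphi(t_{n+1},x_i)=\varphi_x(t_{n+1},x_i)+O(\dx^2\|\varphi_{3x}\|_\infty)$. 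Combining these with the reaction and source terms, which are evaluated pointwise and contribute no discretization error, gives \eqref{eq:high-const-CH-bound} with $\max(\tau^2,\dx^2)$ and the stated dependence on $\|\varphi_{3t}\|_\infty$ and $\sum_{2\le p\le 4}\|\varphi_{px}\|_\infty$, i.e.\ $q=2$.

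The remaining ingredient is condition (iii) of Remark~\ref{rem:ho}, the uniform boundedness of $\sup_{a}\|(\widetilde M^{a,n+1})^{-1}\|_\infty$. Here I expect the main subtlety to lie, precisely because the high order scheme need not be monotone and the matrix $\widetilde M^{a,n+1}$ need not be an $M$-matrix (this is flagged in Remark~\ref{rem:existenceBDF} for BDF2 with non-vanishing drift). The honest route is to observe that the diagonal of $\widetilde M^{a,n+1}$ carries the $O(1)$ mass term together with positive $O(\tau/\dx^2)$ and $O(\tau/\dx)$ diffusion and drift contributions, so that for $\tau,\dx$ small the diagonal dominates and a Gershgorin-type or diagonal-dominance argument bounds the inverse norm uniformly; for Crank-Nicolson, whose implicit part is a half-weighted $M$-matrix perturbation of the identity, this is immediate. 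Since the statement explicitly says the proof is omitted, I would simply record that (i)--(iii) of Remark~\ref{rem:ho} hold for both schemes and invoke the remark to conclude (A3) with $k=2$, $q=2$, noting that the delicate point of invertibility is addressed separately (for BDF2) through the existence discussion referenced in Remark~\ref{rem:existenceBDF}.
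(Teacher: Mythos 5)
Your overall strategy is exactly the route the paper intends: the paper itself omits the proof of Lemma~\ref{lem:high-FD}, but the remark preceding the BDF2 definition indicates that (A3) is to be checked through Remark~\ref{rem:ho} with $\ma=1$ for BDF2 and $\ma=\fud$ for CN, and your Taylor-expansion verification of condition (i) of that remark (second order of the BDF2 time stencil, of the central second difference, and of the shifted three-point stencils \eqref{eq:spaceBDF}, with constants controlled by $\|\varphi_{3t}\|_\infty$ and $\sum_{2\le p\le 4}\|\varphi_{px}\|_\infty$) is correct and yields $k=2$, $q=2$.

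The gap is in your verification of condition (iii) for BDF2. The claim that ``for $\tau,\dx$ small the diagonal dominates'' is not correct: writing out a generic interior row of $\widetilde M^{a,n+1}$ for $b=b^+\geq 0$, the entries are $\tau b^+/(2\dx)$ at column $i-2$, $-\tau\sigma^2/(2\dx^2)-2\tau b^+/\dx$ at column $i-1$, $3/2+\tau\sigma^2/\dx^2+3\tau b^+/(2\dx)+\tau f$ on the diagonal, and $-\tau\sigma^2/(2\dx^2)$ at column $i+1$. The diffusion contributes equally to the diagonal and to the off-diagonal absolute sum, while the drift contributes \emph{more} to the off-diagonal sum ($5\tau b^+/(2\dx)$) than to the diagonal ($3\tau b^+/(2\dx)$), so the dominance margin is
\begin{equation*}
  |\widetilde M_{ii}| - \sum_{j\neq i}|\widetilde M_{ij}| \;=\; \frac32 + \tau f - \frac{\tau\, b^+}{\dx}\,.
\end{equation*}
This margin does not improve as the mesh is refined at a fixed ratio $\tau/\dx$, which is precisely the regime used throughout the paper (e.g.\ $\tau=4\dx$ in Example~1); diagonal dominance, and hence your bound on $\sup_a\|(\widetilde M^{a,n+1})^{-1}\|_\infty$, holds only under a CFL-type ratio condition of the form $\tau\|b\|_\infty/\dx<3/2$ (up to the $O(\tau)$ term $\tau f$), or when the drift vanishes. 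So for BDF2 with a genuine drift term, condition (iii) of Remark~\ref{rem:ho} is not established by your argument, and deferring the issue to Remark~\ref{rem:existenceBDF} does not close it either: that remark says exactly that no theoretical results are available in this case. Your treatment of CN is fine provided its implicit spatial part is the monotone (upwinded) discretization, since then $\widetilde M^{a,n+1}=I+\frac{\tau}{2}L^a$ with $L^a$ of M-matrix type and the inverse bound is immediate. To make the BDF2 half of the lemma rigorous you must either state the ratio restriction on $\tau/\dx$ explicitly as a hypothesis, or supply a genuinely different argument for the uniform inverse bound.
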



In the one-dimensional case, the previous schemes can be extended to non-uniform grids $(x_i)$. For instance
for $D^2u_i$ one can use the following expression, with $h_i:=x_{i+1}-x_i$:
$$
  D^2u_i = \frac{2}{h_{i-1}+h_i} \bigg(\frac{1}{h_{i-1}} u_{i-1} - \bigg(\frac{1}{h_{i-1}} + \frac{1}{h_{i}}\bigg) u_i + 
      \frac{1}{h_{i}} u_{i+1} \bigg).
$$
This finite difference is generally of first order consistent, and of second order if
$x_i = q(y_i)$ with a uniform grid $y_i$ and a piecewise smooth Lipschitz function $q(\cdot)$,
as we will have in Example 2, Section \ref{sec:UV}. 

%

\section{Main results}\label{sec:results}

We first state the main result on the convergence of the filtered schemes introduced in the previous section.

\begin{teo}
\label{main_teo}
Let assumptions (A1), (A2') be satisfied. 
Let $u$ (resp.\ $u_{M}$) denote the solution of the filtered (resp.\ monotone) scheme. Let $v$ be the viscosity solution of \eqref{eq:1D}.
\begin{enumerate}[label=(\roman*)]
\item
(Convergence of filtered scheme)
If the monotone scheme satisfies the error estimate, for some $\beta>0$,
\be\label{eq:est_mono}
  \max_{0\leq n\leq N} \big\|u^n_{M}-v^n\big\|_\infty\leq 
    C_1 \max(\tau,\dx)^\beta,
\ee
and if in the filtered scheme $\vare$ is chosen such that, for some constant $C\geq 0$,
\be\label{eq:var-est-loworder}
   0<\vare \leq C \max(\tau,\dx)^\beta,
\ee
then the filtered scheme $u^n$ will satisfy the same estimate as for $u_M^n$, i.e.
\beno
  \max_{0\leq n\leq N} \big\|u^n-v^n\big\|_\infty\leq 
     C \max(\tau,\dx)^\beta
\eeno
for some constant $C\geq 0$.
\item (First order convergence in regular cases)
Assume that the viscosity solution has regularity $v\in C^{2,2+q}([0,T],\R)$ (with $q$ as in (A2')).
Assume furthermore that $\vare$ is chosen such that 
$$ 
  0 < \vare \leq c_0 \max(\tau,\dx)
$$
for some constant $c_0\geq 0$. Then a first-order estimate 
holds  for the filtered scheme:
$$
  \max_{0\leq n\leq N} \|u^n-v^n\|_\infty 
     \leq C \max(\tau,\dx)
$$
for some constant $C\geq 0$.
\item (Local high order consistency)
Assume (A3). Let $(t,x)$ be given and assume that 
$v$ is sufficiently regular in a neighborhood of $(t,x)$. 
Assume that 
$$
   \vare:= c_0  \max(\tau,\dx),
$$
with a constant $c_0$ such that
\be\label{eq:CvM}
   C^v_M  := C_M  \bigg(\|v_{tt}\|_\infty  + \sum_{2\leq p\leq 2+q} \|v_{px}\|_\infty \bigg) < c_0
\ee
where the constants $C_M$ and $q$ are as in (A2').
Then, for sufficiently small $t_n-t$, $x_i-x$, $\tau$, $\dx$,
the filtered scheme satisfies the same high order consistency estimate as for the high-order scheme $\ccS_H$.
\end{enumerate}
\end{teo}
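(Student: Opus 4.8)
The plan rests on one structural observation: since the filter $F$ in \eqref{eq:filter} satisfies $\|F\|_\infty\le 1$, the filtered update \eqref{eq:filterscheme} differs from the monotone update by at most $\vare\tau$ in each component, i.e.\ $\|S_F(u^n,u^{n-1})-S_M(u^n)\|_\infty\le\vare\tau$. Everything then follows from a stability estimate for the (possibly implicit) monotone solution operator $S_M$, which I would establish first. Writing $p=S_M(\varphi)$, $q=S_M(\psi)$ and picking, for each row $i$, a control $a^i\in A$ attaining the supremum in \eqref{eq:form_of_scheme2} for $p_i$, the matrix $\bar M$ assembled from these rows is again an $M$-matrix by (A1)(i), and $(\bar M(p-q))_i\ge (G^{\bar a}(\varphi)-G^{\bar a}(\psi))_i$. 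Using $\bar M^{-1}\ge 0$, the bound \eqref{eq:boundNorm} on $\|\bar M^{-1}\|_\infty$ and the Lipschitz bound \eqref{eq:Glip}, after symmetrising in $(\varphi,\psi)$ one gets $\|S_M(\varphi)-S_M(\psi)\|_\infty\le(1+C\tau)^2\|\varphi-\psi\|_\infty$. The same argmax comparison, applied when one vector solves the scheme only up to a residual $g$, gives $\|p-q\|_\infty\le(1+C\tau)\big((1+C\tau)\|\varphi-\psi\|_\infty+\|g\|_\infty\big)$, which is the form used repeatedly below.

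For (i), set $e^n:=u^n-u_M^n$, so $e^0=0$. Writing $u^{n+1}=S_M(u^n)+r^n$ with $\|r^n\|_\infty\le\vare\tau$, the stability estimate gives $\|e^{n+1}\|_\infty\le(1+C\tau)\|e^n\|_\infty+\vare\tau$; iterating with $(1+C\tau)^n\le e^{CT}$ and $N\tau=T$ yields $\max_n\|e^n\|_\infty\le Te^{CT}\vare$. Combining with the hypothesis \eqref{eq:est_mono} via the triangle inequality and inserting the choice \eqref{eq:var-est-loworder} of $\vare$ gives the stated bound. For (ii), global regularity $v\in C^{2,2+q}$ makes $v$ a classical solution, so inserting it into the scheme and using (A2') produces a residual $\tau\mathcal E^v_{\ccS_M}$ with $\|\mathcal E^v_{\ccS_M}\|_\infty\le C^v_M\max(\tau,\dx)$; the residual form of the stability estimate plus discrete Gronwall then gives $\max_n\|u_M^n-v^n\|_\infty\le C\max(\tau,\dx)$, i.e.\ \eqref{eq:est_mono} with $\beta=1$, and part (i) finishes the proof.

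Part (iii) uses the filter design and is the heart of the argument. The aim is to show that at grid points near $(t,x)$, for small $\tau,\dx$, the argument of $F$ in \eqref{eq:filterscheme} lies in $[-1,1]$ when the smooth exact solution is inserted, so that $F$ acts as the identity, $S_F(v^n,v^{n-1})_i=S_H(v^n,v^{n-1})_i$, and the high-order estimate (A3) transfers verbatim. Estimating the filter argument through $v^{n+1}_i:=v(t_{n+1},x_i)$, the high-order piece $\tfrac1\tau|S_H(v^n,v^{n-1})_i-v^{n+1}_i|\le C^v_{H,k}\max(\tau^k,\dx^k)$ is exactly (A3), while $\tfrac1\tau|v^{n+1}_i-S_M(v^n)_i|\le(1+C\tau)C^v_M\max(\tau,\dx)$ follows from (A2') after converting the $\ccS_M$-residual into the explicit defect $v^{n+1}_i-S_M(v^n)_i$ via the residual form of the stability estimate. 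With $\vare=c_0\max(\tau,\dx)$ this gives
\[
\frac{|S_H(v^n,v^{n-1})_i-S_M(v^n)_i|}{\vare\tau}\le\frac{C^v_{H,k}\max(\tau^k,\dx^k)}{c_0\max(\tau,\dx)}+\frac{(1+C\tau)C^v_M}{c_0}\longrightarrow\frac{C^v_M}{c_0}<1,
\]
the limit being strictly below $1$ exactly because of the assumption \eqref{eq:CvM} that $C^v_M<c_0$, and because $k\ge2$ makes the first term vanish. Hence for $\tau,\dx$ small the filter is inactive and $S_F=S_H$ locally.

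The main obstacle is the stability estimate carried through the implicit operator $S_M$ using only (A1) — the argmax/$M$-matrix comparison with its symmetrisation and Gronwall bookkeeping — and, in (iii), the passage between the two notions of consistency (the $\ccS_M$-residual of (A2') versus the explicit defect $v^{n+1}_i-S_M(v^n)_i$), which again goes through the stability estimate. One must also check that local regularity of $v$ suffices, i.e.\ that the finite-difference stencils entering $S_M$ and $S_H$ stay inside the neighbourhood of $(t,x)$ where $v$ is smooth as $\tau,\dx\to0$.
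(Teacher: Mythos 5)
Your proposal is correct and takes essentially the same route as the paper: the Lipschitz stability of $S_M$ via the optimizing-matrix/$M$-matrix comparison (Proposition~\ref{prop:mono}), the $\vare\tau$-perturbation plus Gronwall recursion for (i) (Theorem~\ref{teo:u-uM} followed by a triangle inequality), first-order convergence of the monotone scheme for (ii) (Lemma~\ref{lem:convSM}), and for (iii) the splitting of the filter argument into the (A3) term and the (A2') residual converted into the defect $\frac{1}{\tau}(v^{n+1}-S_M(v^n))$ through the bound $\|(M^{*,n+1})^{-1}\|_\infty\le 1+C\tau$. The one technical point to adjust is that the supremum over $a$ need not be attained (no continuity in $a$ is assumed), so your row-wise argmax must be replaced, exactly as the paper does, by a maximizing sequence and a passage to the limit in the closure of $\{(M^a,G^a):a\in A^J\}$.
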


\begin{rem}\label{rem:choice_eps}
Motivated by this result, we shall choose  $\vare$ of the form
$$
   \vare = c_0\max(\tau,\Delta x).
$$
If the solution $v$ is sufficiently smooth, we choose $c_0$ such that 
$$
  c_0 > C^v_{M}
$$
(where the constant $C^v_{M}$ is as in the left-hand side of \eqref{eq:CvM})
in order to get the high order consistency $(iii)$. 
Moreover, in general the monotone scheme will satisfy a bound of the form \eqref{eq:est_mono} for some $\beta\in]0,1]$,
and then this choice of $\vare$ ensures that \eqref{eq:var-est-loworder} is satisfied and this gives the convergence 
of the filtered scheme for the case of nonsmooth solutions.
\end{rem}

We give a proof of Theorem \ref{main_teo} at the end of this section, and start by proving some preliminary results on the monotone scheme presented in Section \ref{sec:mono}.

First of all we give an elementary result for solving implicit schemes.
Let $F$ be such that 
\be \label{eq:Fx}
   F(x) = \sup_{a\in A} (M^a x - G^a) \quad \mbox{in $\R^J$.}
\ee
In every timestep of the monotone scheme (\ref{eq:schemeM}) with (\ref{eq:form_of_scheme}),
an equation of the type $F(x)=0$ has to be solved for $x=u^{n+1} \in \mathbb{R}^J$.

Notice that the supremum in (\ref{eq:Fx}) may not be attained in general if the 
 the maps $a\conv M^a$ and $a\conv G^a$
are not continuous.\footnote{We avoided making a continuity assumptions not only because the PDE coefficients may be discontinuous functions of the control, but also because the discretisation may introduce discontinuities if switches between different schemes are utilised to ensure monotonicity (see, e.g., \cite{ma-for-2016}).}

However, considering a maximizing sequence $(M^{a_j},G^{a_j})$ such 
that $\lim_{j\conv \infty} M^{a_j}x- G^{a_j} = F(x)$, since $(M^{a_j},G^{a_j})$ is bounded in a finite dimensional space, 
it is possible to extract a convergent subsequence so that $(M^{a'_j}, G^{a'_j})\conv (M^*,G^*)$ and $M^*x - G^* = F(x)$.
Hence defining 
$$ Q:=\{(M^a,G^a), \ a \in A^J\}, $$
we can write, for any $x\in \R^J$:
\be
\label{eq:sup}
  F(x)=\max_{(M,G)\in \bar Q} (Mx -G),
\ee
where now the supremum is attained in $\bar Q$.

The policy iteration algorithm is then defined as follows: 
\begin{enumerate}
\item
Start from some $x_0\in \R^J$.
\item
Then for $k\geq 0$,
define
\[
(M^k,G^k)\in \arg\max_{(M,G)\in \bar Q} (M x_k-G),
\]
i.e., an element $(M^{k},G^{k})$ in $\bar Q$ such that $M^{k}x_k-G^{k}=F(x_k)$.
\item
Then take 
$x_{k+1}$ the solution of
\[
M^{k}x_{k+1}-G^{k}=0.
\]
\item
Iterate from 2.\ until convergence.
\end{enumerate}

\begin{prop}\label{lem:policy}
Let $A$ be a  non-empty compact set, and let matrices $M^a\in \R^{J\times J}$, vectors $G^a\in \R^J$ be defined 
for $a\in A^J$ as in (\ref{eq:extention}). 
Assume that 
\be\label{eq:lem-monot}
  \forall a\in A^J,\quad   (M^a)^{-1}\geq 0
\ee
and
\be\label{eq:lem-bound}
  \sup_{a\in A} \| M^a\|_\infty \leq C, \quad
  \sup_{a\in A^J} \| (M^a)^{-1}\|_\infty \leq C, \quad
  \sup_{a\in A} \| G^a\|_\infty \leq C
\ee
for some constant $C\geq 0$.

$(i)$ There exists a unique $x\in \R^J$ such that $F(x)=0$, with $F$ from (\ref{eq:sup}).

$(ii)$ For any $x_0 \in \mathbb{R}^J$, the policy iteration algorithm converges to $x$.

$(iii)$ The convergence is superlinear.
\end{prop}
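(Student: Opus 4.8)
The plan is to obtain existence and algorithmic convergence together from the monotonicity of the policy iterates, to deduce uniqueness from a comparison argument, and to establish the superlinear rate by recognizing the iteration as a semismooth Newton method. I would begin by recording that $F$ in \eqref{eq:sup} is, componentwise, a maximum over the compact set $\bar Q$ of the affine maps $(M,G)\mapsto (Mx-G)_i$; since $\|M\|_\infty\le C$ uniformly on $\bar Q$ by \eqref{eq:lem-bound}, the maximum is attained and $F$ is Lipschitz with constant $C$, which is what makes the policy iteration well defined.

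For statement $(ii)$ and for existence, the key identity is
\[
  x_k - x_{k+1} = (M^k)^{-1}F(x_k),
\]
obtained by subtracting the defining relation $M^k x_{k+1} - G^k = 0$ from $M^k x_k - G^k = F(x_k)$. For $k\ge 1$, optimality of the previous iterate gives $F(x_k) \ge M^{k-1}x_k - G^{k-1} = 0$, and since $(M^k)^{-1} \ge 0$ by \eqref{eq:lem-monot} the identity yields $x_{k+1} \le x_k$ componentwise. The bound $\|x_{k+1}\|_\infty = \|(M^k)^{-1}G^k\|_\infty \le C^2$ from \eqref{eq:lem-bound} shows the iterates are bounded, so the eventually non-increasing sequence converges to some $x^*$. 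Then $x_k - x_{k+1} \to 0$, and since $F(x_k) = M^k(x_k - x_{k+1})$ with $\|M^k\|_\infty \le C$, we get $F(x_k) \to 0$; continuity of $F$ gives $F(x^*)=0$.

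For uniqueness, suppose $F(x)=F(y)=0$. Let $(M^y,G^y)\in\bar Q$ be the (extended, as in \eqref{eq:extention}) maximizer at $y$, so that $M^y y - G^y = F(y) = 0$. Since $F(x)=0$ forces $Mx-G\le 0$ componentwise for every $(M,G)\in\bar Q$, taking $(M^y,G^y)$ gives $M^y x - G^y \le 0 = M^y y - G^y$, hence $M^y(x-y)\le 0$; multiplying by $(M^y)^{-1}\ge 0$ yields $x\le y$, and the symmetric argument gives $y\le x$, so $x=y$. Together with the previous paragraph this proves $(i)$ and shows the algorithm converges to the unique root.

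The main obstacle is the superlinear rate $(iii)$. Rewriting the iteration as $x_{k+1} = x_k - (M^k)^{-1}F(x_k)$ exhibits it as a Newton step in which $M^k$ plays the role of a generalized Jacobian of $F$ at $x_k$. Using optimality of $(M^k,G^k)$ at $x_k$ and of a maximizer $(M^*,G^*)$ at $x^*$, one derives the two-sided bound
\[
  0 \le G^k - M^k x^* \le (M^k - M^*)(x_k - x^*),
\]
whence $\|x_{k+1}-x^*\|_\infty \le C\,\|(M^k-M^*)(x_k-x^*)\|_\infty$. The uniform estimate $\|M^k-M^*\|_\infty\le 2C$ already yields linear convergence; the superlinear rate requires $\|(M^k-M^*)(x_k-x^*)\|_\infty = o(\|x_k-x^*\|_\infty)$ for a suitably chosen optimal $(M^*,G^*)$, which is precisely the semismoothness of the convex max-function $F$ at $x^*$. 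I would therefore invoke the semismooth Newton convergence theory, as developed for Howard's algorithm in \cite{bok-mar-zid-09}, whose hypotheses are exactly \eqref{eq:lem-monot}--\eqref{eq:lem-bound}. I expect the delicate point to be verifying the semismoothness estimate uniformly along the iterates, especially as no continuity of $a\mapsto(M^a,G^a)$ is assumed, so one must argue with the compact family $\bar Q$ rather than with $A$ directly.
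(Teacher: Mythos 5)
Your proposal is correct in substance, but note that the paper itself contains no proof of Proposition \ref{lem:policy}: it is justified only by the remark that follows it, citing \cite{bok-mar-zid-09} (where the result is proved under continuity of the maps $a\conv M^a$, $a\conv G^a$), then \cite{ma-for-2016} (which removes the continuity hypothesis for convergence), and asserting that superlinear convergence follows by the same arguments as in \cite{bok-mar-zid-09}. What you have done is reconstruct those arguments in a self-contained way: the identity $x_k-x_{k+1}=(M^k)^{-1}F(x_k)$, the sign $F(x_k)\ge 0$ for $k\ge 1$, and inverse-positivity give a non-increasing, bounded sequence of iterates, hence convergence to a root of $F$; the comparison argument with a maximizer at $y$ gives uniqueness; and the two-sided bound $0 \le G^k - M^k x^* \le (M^k - M^*)(x_k - x^*)$ (which I verified) correctly sets up the Newton/semismoothness interpretation of the rate. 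So your route buys an in-house proof of $(i)$ and $(ii)$ where the paper buys brevity by citation; for $(iii)$ you and the paper are in the same position, both ultimately deferring the semismoothness estimate to \cite{bok-mar-zid-09}.

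Two caveats. First, in the monotonicity, boundedness and uniqueness steps you apply $(M)^{-1}\ge 0$ and the norm bounds to maximizers taken in the closure $\bar Q$, whereas assumptions \eqref{eq:lem-monot}--\eqref{eq:lem-bound} are stated on $Q$; you flag this only at the end, in connection with $(iii)$, but it is needed from the very first step. The fix is short and should be recorded: if $M^{a_j}\conv M^*$ with $(M^{a_j})^{-1}\ge 0$ and $\|(M^{a_j})^{-1}\|_\infty\le C$, then a subsequence of the inverses converges to some $P\ge 0$ satisfying $M^*P=I$, so $(M^*)^{-1}=P\ge 0$ with $\|(M^*)^{-1}\|_\infty\le C$. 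Second, two side remarks are inaccurate though inessential: the uniform bound $\|M^k-M^*\|_\infty\le 2C$ only gives $\|x_{k+1}-x^*\|_\infty\le 2C^2\|x_k-x^*\|_\infty$, which is not linear convergence unless $2C^2<1$ (convergence itself comes from the monotonicity you already proved, so nothing is lost); and the hypotheses of \cite{bok-mar-zid-09} are not ``exactly'' \eqref{eq:lem-monot}--\eqref{eq:lem-bound}, since that reference assumes continuity in the control --- precisely the gap that \cite{ma-for-2016} and the compactness-of-$\bar Q$ device are meant to close, as you yourself note in your final sentence.
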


\begin{rem}
The 
result of Proposition~\ref{lem:policy} 
is given in~\cite{bok-mar-zid-09} under a continuity condition of the maps 
$a\conv M^a$ and $a\conv G^a$.
It was more recently shown in~\cite{ma-for-2016} that
the continuity condition is not necessary for convergence.
Furthermore, superlinear convergence 
can be obtained by using the same arguments as in \cite{bok-mar-zid-09}.
\end{rem}

\begin{prop}[Stability and monotonicity]\label{prop:stability}
Let  (A1) be satisfied. 
\\
$(i)$
For any $(\tau,\Delta x)$ there exists a unique solution of \eqref{eq:schemeM} (denoted $(u^n_M)_{n\geq 0}$).
\\
$(ii)$ The scheme $\ccS_M$ is stable in the following sense: 
there exists a constant $C\geq 0$ (independent of $\tau$ and $\Delta x$) such that for any $0\leq n\leq N$
\be\label{eq:boundinf}
  \|u^n_M\|_\infty \leq e^{CT}(\|v_0\|_\infty + CT).
\ee
$(iii)$ 
The scheme is monotone in the sense of Barles and Souganidis~\cite{BS91}, i.e.
\be\label{eq:monot}
  \phi\leq \psi \quad \Rightarrow\quad\ccS_{M}(t,x,r,\phi)\geq \ccS_{M}(t,x,r,\psi).
\ee
\end{prop}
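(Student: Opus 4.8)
The plan is to establish the three claims in order, relying on Assumption (A1) throughout and on the policy iteration result of Proposition \ref{lem:policy} for the existence part.

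For part $(i)$, I would argue by induction on $n$. The initialization $u^0_M$ is prescribed by \eqref{eq:u0init}. Assuming $u^n_M$ is determined, the scheme \eqref{eq:schemeM} in the form \eqref{eq:form_of_scheme2} requires solving $\sup_{a\in A^J}\{M^{a,n+1}u^{n+1}-G^{a,n}(u^n)\}=0$ for $u^{n+1}$. I would check that the hypotheses of Proposition \ref{lem:policy} are met: (A1)($i$) gives that each $M^{a,n+1}$ is an $M$-matrix, hence $(M^{a,n+1})^{-1}\geq 0$, which is \eqref{eq:lem-monot}; (A1)($ii$) and (A1)($iii$) give the uniform bounds on $\|M^{a,n+1}\|_\infty$ and $\|(M^{a,n+1})^{-1}\|_\infty$; and (A1)($v$), specifically \eqref{eq:Glip} and \eqref{eq:Gbound}, bounds $\|G^{a,n}(u^n)\|_\infty$ once $u^n$ is fixed. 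Note the right-hand side here is the \emph{fixed} vector $G^{a,n}(u^n)$, so Proposition \ref{lem:policy} applies verbatim with $G^a:=G^{a,n}(u^n)$, yielding a unique solution $u^{n+1}_M$.

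For part $(iii)$ (which I would prove before $(ii)$, since monotonicity is the cleaner computation), I would verify \eqref{eq:monot} directly from the structure \eqref{eq:form_of_scheme}. Take $\phi\leq\psi$ componentwise. In $\ccS_M(t_{n+1},x_i,r,\varphi)_i$ the dependence on $\varphi$ enters only through the off-diagonal terms $\sum_{j\neq i}M^{a,n+1}_{ij}\varphi(t_{n+1},x_j)$ and through $-G^{a,n}(\varphi(t_n,\cdot))_i$. Since $M^{a,n+1}$ is an $M$-matrix, its off-diagonal entries satisfy $M^{a,n+1}_{ij}\leq 0$ for $j\neq i$, so $\sum_{j\neq i}M^{a,n+1}_{ij}\phi_j\geq\sum_{j\neq i}M^{a,n+1}_{ij}\psi_j$; and by (A1)($iv$), monotonicity of $G^{a,n}$ gives $-G^{a,n}(\phi)_i\geq -G^{a,n}(\psi)_i$. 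Taking the supremum over $a$ preserves the inequality, yielding $\ccS_M(t,x,r,\phi)\geq\ccS_M(t,x,r,\psi)$.

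For part $(ii)$, the stability estimate \eqref{eq:boundinf}, I expect the main work to lie. I would derive a one-step bound and then iterate. From \eqref{eq:form_of_scheme2}, at the optimal (or near-optimal) control one has $M^{a,n+1}u^{n+1}=G^{a,n}(u^n)$, hence $\|u^{n+1}\|_\infty\leq\|(M^{a,n+1})^{-1}\|_\infty\,\|G^{a,n}(u^n)\|_\infty$. Combining (A1)($iii$), namely $\|(M^{a,n+1})^{-1}\|_\infty\leq 1+C\tau$, with the Lipschitz and boundedness estimates \eqref{eq:Glip}--\eqref{eq:Gbound}, which give $\|G^{a,n}(u^n)\|_\infty\leq(1+C\tau)\|u^n\|_\infty+C\tau$, I would obtain a recursion of the form $\|u^{n+1}\|_\infty\leq(1+C'\tau)\|u^n\|_\infty+C'\tau$ for a suitable constant $C'$. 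The delicate point is to make the supremum argument rigorous when the sup over $A^J$ is not attained: I would either invoke the closure $\bar Q$ from \eqref{eq:sup} so that the maximizing control exists, or bound uniformly over all $a$ and pass to the supremum. Iterating the recursion via the discrete Gr\"onwall inequality and using $(1+C'\tau)^n\leq e^{C'T}$ for $n\tau\leq T$ then yields $\|u^n_M\|_\infty\leq e^{C'T}(\|u^0\|_\infty+C'T)$, which is \eqref{eq:boundinf} since $\|u^0\|_\infty=\|v_0\|_\infty$ up to relabelling the constant. The main obstacle is therefore the careful bookkeeping of the constants through the recursion and the handling of the non-attained supremum, rather than any conceptual difficulty.
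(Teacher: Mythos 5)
Your proposal is correct and follows essentially the same route as the paper's proof: part $(i)$ by invoking Proposition~\ref{lem:policy} timestep by timestep, part $(ii)$ by combining (A1)($iii$) and (A1)($v$) into the one-step recursion $\|u^{n+1}_M\|_\infty \leq (1+C\tau)\|u^n_M\|_\infty + C\tau$ and iterating, and part $(iii)$ from the non-positivity of the off-diagonal entries of $M^{a,n+1}$ together with the monotonicity of $G^{a,n}$. The only differences are presentational: you prove $(iii)$ before $(ii)$, and you make explicit the verification of the hypotheses of Proposition~\ref{lem:policy} and the handling of the possibly non-attained supremum via the closure $\bar Q$, points the paper leaves implicit.
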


\begin{rem} 
For the monotonicity property \eqref{eq:monot}, only assumptions (A1)$(i)$ and (A1)$(iv)$ are needed.
\end{rem}

\begin{rem}
  As a consequence, under assumptions (A1) and (A2) the scheme $\ccS_M$ (with solution $u_M$) 
satisfies the stability, monotonicity and consistency conditions of the convergence theorem of
Barles and Souganidis~\cite{BS91}. As long as a comparison principle holds for the HJB equation \eqref{eq:HJB}, this proves that the 
monotone scheme $u_M$ converges to the (unique) viscosity solution.
  For numerical purposes we choose 
to deal with a bounded domain, and in principle this would require a precise statement for the viscosity solution of the related
HJB equation with specific boundary conditions.
  However it is not the focus of the paper to go into such a study.
We will rather assume that the monotone scheme deals correctly with the boundary conditions and focus on obtaining a high-order scheme
in the interior of the domain.
\end{rem}

\begin{proof}[Proof of Proposition~\ref{prop:stability}.]

$(i)$ The existence and uniqueness of a solution for \eqref{eq:schemeM} follows from Proposition~\ref{lem:policy}.

$(ii)$ Using assumptions (A1)($iii$) and (A1)($v$)  one has for $0<\tau \leq 1$
\be
   \|u^{n+1}_M\|_\infty 
       & \leq  &  (1+C_1\tau) (C_2\tau +\|u^n_M\|_\infty)  \nonumber \\
       & \leq  &  (1+C \tau) \|u^n_M\|_\infty + C \tau \label{eq:unp1Minf}
\ee
(for some constants $C_1,C_2,C\geq 0$).
By recursion, the bound \eqref{eq:boundinf} is obtained.
 
$(iii)$ Using that 
$$
\ccS_M(t_{n+1},x_i,r, \varphi)_i =  \frac{1}{\tau}\sup_{a\in A}\Big\{ 
     M^{a,n+1}_{ii} r + \sum_{j\neq i} M^{a,n+1}_{ij} \varphi(t_{n+1},x_j) - G^{a,n}(\varphi(t_n,.))_i \Big\},
$$
the non-positivity of the extra diagonal elements of $M^{a,n+1}$ (assumption (A1)($i$)) and the monotonicity of $G$ (assumption (A1)($iv$)),
the monotonicity of $\ccS_M$ follows.
\end{proof}

\begin{prop}\label{prop:mono}
Let assumptions (A1)$(i)$ and (A1)$(iv)$ be satisfied. 
\begin{itemize}
\item[(i)] For any $\phi,\psi\in \R^J$, 
$$
    \phi\leq \psi   \quad \Rightarrow \quad S_{M}(\phi)\leq S_{M}(\psi).
$$
\item[(ii)] Let also assumptions (A1)$(iii)$ and (A1)$(v)$ be satisfied. Then, there exists $C\geq 0$ such that, for any  $\psi,\phi\in\R^J$:
\be\label{eq:S-S}
  \|S_M(\phi)-S_M(\psi)\|_\infty \leq (1+C \tau)\|\phi - \psi\|_\infty.
\ee
\end{itemize}
\end{prop}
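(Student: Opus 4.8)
The plan is to exploit that $S_M$ is the solution operator of the implicit equation $\sup_{a\in A^J}\{M^{a,n+1}z - G^{a,n}(\cdot)\}=0$, whose well-posedness is guaranteed by Proposition~\ref{lem:policy} and Proposition~\ref{prop:stability}$(i)$, and to combine this with a discrete comparison argument based on the $M$-matrix structure. Write $x:=S_M(\phi)$ and $y:=S_M(\psi)$ (suppressing the time index $n+1$ on $M^a$ and $n$ on $G^a$). First I would record two facts. Unpacking the componentwise identity $\sup_{a\in A^J}\{M^a x - G^a(\phi)\}=0$ shows that $x$ is a subsolution for \emph{every} control, i.e.\ $M^a x \le G^a(\phi)$ for all $a\in A^J$. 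On the other hand, by the attainment discussion leading to \eqref{eq:sup}, the supremum defining $y$ is realised at some $(M^{\bar a},G^{\bar a})\in\bar Q$, so that $M^{\bar a}y = G^{\bar a}(\psi)$; since the $M$-matrix property \eqref{eq:A1-M-matrix}, the monotonicity \eqref{eq:Gmono} and the Lipschitz bound \eqref{eq:Glip} are all stable under the limits defining $\bar Q$, the pair $(M^{\bar a},G^{\bar a})$ inherits them.

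For $(i)$ I would simply chain these facts. Evaluating the subsolution property at $\bar a$ and using $\phi\le\psi$ with \eqref{eq:Gmono} gives $M^{\bar a}x \le G^{\bar a}(\phi)\le G^{\bar a}(\psi)=M^{\bar a}y$, hence $M^{\bar a}(x-y)\le 0$. Multiplying by $(M^{\bar a})^{-1}$, which is nonnegative by \eqref{eq:A1-M-matrix}, preserves the inequality and yields $x-y\le 0$, i.e.\ $S_M(\phi)\le S_M(\psi)$.

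For $(ii)$ I would run the same chain quantitatively. The subsolution property at $\bar a$ together with $M^{\bar a}y=G^{\bar a}(\psi)$ gives $M^{\bar a}(x-y)\le G^{\bar a}(\phi)-G^{\bar a}(\psi)$, and \eqref{eq:Glip} bounds the right-hand side componentwise by $(1+C\tau)\|\phi-\psi\|_\infty\,\mathbf 1$, where $\mathbf 1$ denotes the all-ones vector. Applying $(M^{\bar a})^{-1}\ge 0$ and using $\|(M^{\bar a})^{-1}\mathbf 1\|_\infty\le\|(M^{\bar a})^{-1}\|_\infty\le 1+C\tau$ from \eqref{eq:boundNorm}, one bounds $\max_i(x-y)_i$ by $(1+C\tau)^2\|\phi-\psi\|_\infty$. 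Interchanging the roles of $\phi$ and $\psi$ controls $\max_i(y-x)_i$ in the same way, and since $(1+C\tau)^2\le 1+C'\tau$ for bounded $\tau$, one obtains \eqref{eq:S-S} after renaming the constant.

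The main obstacle is the attainment step: one must justify that the supremum over $a\in A^J$ is achieved by a single control simultaneously in all rows, and that the limiting pair $(M^{\bar a},G^{\bar a})$ still enjoys the $M$-matrix, monotonicity and inverse-norm properties. This is precisely the content of the closure construction $\bar Q$ in \eqref{eq:sup}. Alternatively, and perhaps more cleanly, one can avoid passing to $\bar Q$ altogether: for each $\eta>0$ select $a_{\eta}\in A^J$ with $M^{a_{\eta}}y - G^{a_{\eta}}(\psi)\ge -\eta\,\mathbf 1$ (possible row by row thanks to the decoupling \eqref{eq:extention}), carry the two estimates above with an additional $\eta\,(M^{a_{\eta}})^{-1}\mathbf 1$ term controlled via \eqref{eq:boundNorm}, and let $\eta\to 0$.
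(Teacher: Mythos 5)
Your proof is correct and takes essentially the same route as the paper's: both use the attained (limiting) optimizer in $\bar Q$ for $S_M(\psi)$, the fact that $S_M(\phi)$ satisfies $M^{a}S_M(\phi)\le G^{a}(\phi)$ for \emph{every} control, and then chain these with the monotonicity \eqref{eq:Gmono} (for part $(i)$) or the Lipschitz bound \eqref{eq:Glip} (for part $(ii)$), concluding via $(M^{*})^{-1}\ge 0$ and \eqref{eq:boundNorm}. The only cosmetic difference is that the paper passes to the limit in the subsolution inequality along the maximizing sequence rather than asserting that the limit pair inherits the relevant properties, which is what your closing remark (and your $\eta$-approximation alternative) addresses.
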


\begin{proof}
$(i)$ For explicit schemes the result is straightforward.
Let us now show that this remains true for any implicit finite difference scheme written in the general form (\ref{eq:form_of_scheme}).

Let the optimizing matrix and vector $(M^{*,n+1},G^{*,n})$ be such that 
\be\label{eq:eq1}
  M^{*,n+1} S_M(\psi)-G^{*,n} = 
  \sup_{a\in A^J} \bigg( M^{a,n+1} S_M(\psi) - G^{a,n}(\psi)\bigg) = 0,
\ee
where $(M^{*,n+1},G^{*,n})$ is obtained as a limit of a  convergent subsequence 
 $(M^{a_j,n+1},G^{a_j,n})_j$ that realizes the supremum in \eqref{eq:eq1}, and notice that $(M^{*,n+1})^{-1} \geq 0$ from (A1)($i$).
One also has for any $\phi$ and $a\in A^J$:
$$
  M^{a,n+1} S_M(\phi) - G^{a,n}(\phi) \le 0,
$$
so that, with $a=a_j$ and passing to the limit as $j\conv \infty$:
$$
  M^{*,n+1} S_M(\phi) - G^{*,n}(\phi) \le 0.
$$
Hence 
\be\label{eq:Sphi-Spsi}
  M^{*,n+1} (S_M(\phi) - S_M(\psi)) \le G^{*,n}(\phi) - G^{*,n}(\psi).
\ee
If $\phi\leq \psi$, this implies by A1($iv$)
$$
  M^{*,n+1} (S_M(\phi) - S_M(\psi))\leq 0,
$$
and $(i)$ follows from the monotonicity property of $M^{*,n+1}$.

Let us now prove $(ii)$. Again from \eqref{eq:Sphi-Spsi} together with assumption (A1)$(v)$ one has
$$
  M^{*,n+1} (S_M(\phi) - S_M(\psi)) \le (1+C_1\tau)\|\phi - \psi\|_\infty,
$$
where the quantity in the right-hand side has to be considered as a constant vector. 
Hence, from assumptions (A1)($i$) and (A1)($iii$) we conclude to
$$ 
(S_M(\phi) - S_M(\psi)) \le (1+C_2\tau)(1+C_1\tau)\|\phi - \psi\|_\infty\leq (1+C\tau) \|\phi-\psi\|_\infty.
$$ 
Switching the roles of $\phi$ and $\psi$,
we arrive at the desired estimate~\eqref{eq:S-S}.

\end{proof}

\begin{teo}\label{teo:u-uM}
Let assumption (A1) be satisfied. 
Let $u_M$ and $u$ respectively denote the solution of the monotone and filtered scheme, i.e.:
$$
  u^{n+1}_M = S_M(u^n_M),\qquad \qquad u^{n+1} = S_F(u^n,u^{n-1}).
$$
$(i)$ There exists a constant $C\geq 0$ such that the following estimate holds:
\beno
  \max_{0\leq n\leq N} \|u^n -u^n_M\|_\infty \leq  T e^{CT} \vare.
\eeno
$(ii)$ In particular, 
if the solution of the monotone scheme $u_M$ converges to the viscosity solution of \eqref{eq:HJB}
as $(\tau, \dx) \to 0$, then the solution of the filtered scheme $u$  also.
\end{teo}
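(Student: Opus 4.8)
The plan is to control the difference $e^n := u^n - u^n_M$ by a discrete Grönwall argument resting on two facts: the boundedness of the filter, and the quasi-contraction property of the monotone solution operator $S_M$ established in Proposition~\ref{prop:mono}(ii). Neither part invokes the high order scheme $S_H$ beyond its mere presence inside the filter, which is precisely the point of the filtered construction.

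First I would exploit the definition \eqref{eq:filterscheme} together with the bound $\|F\|_\infty\leq 1$ on the filter \eqref{eq:filter}. For each $n\geq 1$ and every $i$,
$$
  |u^{n+1}_i - S_M(u^n)_i| = \vare\tau\left|F\left(\frac{S_H(u^n,u^{n-1})_i - S_M(u^n)_i}{\vare\tau}\right)\right| \leq \vare\tau,
$$
so that $\|u^{n+1} - S_M(u^n)\|_\infty \leq \vare\tau$: the filtered iterate never strays from the monotone update by more than $\vare\tau$, irrespective of the possibly wild behaviour of $S_H$. Writing $u^{n+1} = S_M(u^n) + r^n$ with $\|r^n\|_\infty\leq \vare\tau$ and subtracting $u^{n+1}_M = S_M(u^n_M)$ gives $e^{n+1} = \big(S_M(u^n) - S_M(u^n_M)\big) + r^n$.

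Next I would apply the Lipschitz stability of $S_M$ from Proposition~\ref{prop:mono}(ii), which holds under (A1), to obtain the recursion
$$
  \|e^{n+1}\|_\infty \leq (1+C\tau)\|e^n\|_\infty + \vare\tau.
$$
Since both schemes share the initial datum $v_0$ (and the first step is initialised so that the same one-step deviation bound holds), we have $e^0=0$, and iterating yields $\|e^n\|_\infty \leq \vare\tau\sum_{k=0}^{n-1}(1+C\tau)^k \leq \vare\tau\,N(1+C\tau)^N$. Using $N\tau=T$ and $(1+C\tau)^N\leq e^{CN\tau}=e^{CT}$ produces the claimed bound $\max_{0\leq n\leq N}\|u^n - u^n_M\|_\infty \leq Te^{CT}\vare$, which is part~$(i)$. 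For part~$(ii)$, since $\vare=\vare_{\tau,\dx}\to 0$ as $(\tau,\dx)\to 0$, part~$(i)$ forces $\|u^n - u^n_M\|_\infty\to 0$ uniformly in $n$; combining this with the assumed convergence $u_M\to v$ through the triangle inequality
$$
  \|u^n - v^n\|_\infty \leq \|u^n - u^n_M\|_\infty + \|u^n_M - v^n\|_\infty
$$
gives convergence of the filtered scheme to the viscosity solution.

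The argument is essentially routine once Proposition~\ref{prop:mono}(ii) is in hand, so there is no serious obstacle here; the only point requiring care is the initial step, namely ensuring that the one-step deviation bound $\|r^n\|_\infty\leq \vare\tau$ together with $e^0=0$ is valid from $n=0$ onward, so that the recursion can be started cleanly. The genuinely substantive content, that $S_M$ is a quasi-contraction in $\|\cdot\|_\infty$, has already been proved, and the boundedness of $F$ does the rest.
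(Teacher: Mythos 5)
Your proof is correct and follows essentially the same route as the paper's: the identical decomposition $u^{n+1}-u^{n+1}_M = \big(S_M(u^n)-S_M(u^n_M)\big) + \vare\tau F(\cdot)$, the bound $\|F\|_\infty\leq 1$, the quasi-contraction from Proposition~\ref{prop:mono}(ii), and a discrete Gr\"onwall recursion started from $u^0=u^0_M$. Your explicit remark about initialising the first step so the one-step deviation bound holds from $n=0$ is a point the paper's proof passes over silently, but it is the same argument.
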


\begin{proof}
$(ii)$ is a straighforward consequence of $(i)$ 
recalling that $\vare\to 0$ as $(\tau, \dx) \to 0$. To prove $(i)$,
by the very definition of the filtered scheme \eqref{eq:filterscheme} one has
$$
  u^{n+1}-u_M^{n+1}= S{_M}(u^n)-S{_M}(u^{n}_M)+\vare \tau F(\cdot).
$$
Hence, thanks to Proposition \ref{prop:mono} and the fact that $\|F\|_\infty \leq 1$, it holds:
\beno
  \big\| u^{n+1}-u_{M}^{n+1}\big\|_\infty\leq (1+C\tau) \big\| u^{n}-u^{n}_{M}\big\|_\infty+\vare\tau. 
\eeno
By recursion, using that $1+C\tau\leq e^{C\tau}$ we obtain
\beno
   \| u^{n}_i-u_M^{n}\|_\infty\leq e^{Ct_n} \big(\| u^{0}-u^{0}_{M}\|_\infty+t_n\vare\big).
\eeno
Then using the fact $u^0_M=u^0$, the desired estimate follows.
\end{proof}

Before going on, we state the following preliminary result establishing the first order of convergence of the monotone scheme in the case of smooth solutions. 

\begin{lem}\label{lem:convSM}
Let assumptions (A1), (A2') be satisfied and let $u_M$ denote the solution of the monotone scheme. 
Assume that the viscosity solution $v$ of \eqref{eq:1D} has regularity $C^{2,2+q}([0,T],\R)$ (with $q$ as in (A2')), and let 
$v^n_j=v(t_n,x_j)$ and $v^n=(v^n_j)_{1\leq j\leq J}$.
The following estimate holds:
$$
  \max_{0\leq n\leq N} \|u^n_M-v^n\|_\infty 
     \leq C \max(\tau,\dx),
$$
for some constant $C\geq 0$ independent of $\tau$ and $\dx$.
\end{lem}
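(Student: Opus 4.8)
The argument is the classical combination of consistency and stability for monotone schemes, the only genuine subtlety being the implicit character of $\ccS_M$. Write $v^n=(v(t_n,x_j))_{j}$ and let $e^n:=u^n_M-v^n$ be the error. Since $u^0_M=v^0=(v_0(x_j))_j$ we have $e^0=0$, so the goal is a Gronwall-type recursion for $\|e^n\|_\infty$.

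First I would insert the exact solution into the scheme. Because $v$ solves the PDE, $v_t+H(\cdot,v,v_x,v_{xx})=0$ at every point, so definition \eqref{eq:EM} gives, at each $(t_n,x_i)$,
\[
  \big|\ccS_M(t_{n+1},x_i,v^{n+1}_i,v)_i\big|=\cE^v_{\ccS_M}(\tau,\dx)\le C^v_M\max(\tau,\dx),
\]
using (A2') and the regularity $v\in C^{2,2+q}$, with $C^v_M=C_M\big(\|v_{tt}\|_\infty+\sum_{2\le p\le 2+q}\|v_{px}\|_\infty\big)$ as in \eqref{eq:CvM}. In the vector form \eqref{eq:form_of_scheme2} this reads $\sup_{a\in A^J}\{M^{a,n+1}v^{n+1}-G^{a,n}(v^n)\}=\tau\,\eta^n$ with $\|\eta^n\|_\infty\le C^v_M\max(\tau,\dx)$, whereas $S_M(v^n)$ satisfies the same equation with right-hand side $0$.

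The key intermediate step, and the main obstacle, is a perturbation estimate comparing $v^{n+1}$ with $S_M(v^n)$: I claim $\|v^{n+1}-S_M(v^n)\|_\infty\le(1+C\tau)\tau\|\eta^n\|_\infty$. To prove it, set $w:=v^{n+1}$ and $\tilde w:=S_M(v^n)$, and pick $(M^*,G^*)\in\bar Q$ attaining the supremum at $w$, so that $M^*w-G^*(v^n)=\tau\eta^n$ while $M^*\tilde w-G^*(v^n)\le 0$; subtracting and applying $(M^*)^{-1}\ge 0$ (assumption (A1)$(i)$) together with the bound (A1)$(iii)$ gives $w-\tilde w\ge-(1+C\tau)\tau\|\eta^n\|_\infty\mathbf 1$. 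Taking instead a maximizer at $\tilde w$ and arguing symmetrically yields $w-\tilde w\le(1+C\tau)\tau\|\eta^n\|_\infty\mathbf 1$, whence the claim. For explicit schemes ($M^{a,n+1}=I_J$) this reduces to a trivial identity.

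Finally I would close the recursion. Writing $u^{n+1}_M=S_M(u^n_M)$ and $v^{n+1}=S_M(v^n)+r^n$ with $\|r^n\|_\infty\le(1+C\tau)\tau\,C^v_M\max(\tau,\dx)$ from the previous step, the contraction estimate \eqref{eq:S-S} of Proposition~\ref{prop:mono}$(ii)$ yields
\[
  \|e^{n+1}\|_\infty\le\|S_M(u^n_M)-S_M(v^n)\|_\infty+\|r^n\|_\infty\le(1+C\tau)\|e^n\|_\infty+C\tau\max(\tau,\dx).
\]
Since $e^0=0$, a discrete Gronwall argument exactly as in the proof of Proposition~\ref{prop:stability}$(ii)$ (using $1+C\tau\le e^{C\tau}$ and $N\tau=T$) gives $\max_{0\le n\le N}\|e^n\|_\infty\le e^{CT}\,T\,C^v_M\max(\tau,\dx)$, which is the asserted bound. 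All steps except the implicit perturbation estimate are routine once \eqref{eq:S-S} and (A2') are available.
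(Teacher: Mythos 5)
Your proof is correct and follows essentially the same route as the paper's: (A2')-consistency evaluated at the classical solution $v$, the M-matrix/optimizer argument behind Proposition~\ref{prop:mono}$(ii)$, and a discrete Gronwall recursion. The only difference is bookkeeping: the paper reruns the Proposition~\ref{prop:mono}$(ii)$ argument directly on $u^{n+1}_M$ versus $v^{n+1}$ with the consistency residual folded into the one-step inequality, whereas you factor through the intermediate vector $S_M(v^n)$, prove the two-sided perturbation bound $\|v^{n+1}-S_M(v^n)\|_\infty\leq(1+C\tau)\,\tau\,\cE^v_{\ccS_M}$ (essentially the estimate the paper itself derives later inside the proof of Theorem~\ref{main_teo}$(iii)$, and which you justify more carefully by using one optimizer at each of the two points), and then invoke \eqref{eq:S-S} as a black box.
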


\begin{proof}
For a given $n$,  
\beno
   &  & \sup_{a\in A^J}  \frac{1}{\tau} (M^{ a,n+1} u^{n+1}_M - G^{ a,n}(u^n_M))
    \ =\  \ccS_M(t_{n+1},x_i,u^{n+1}_M,u_M)  \ =\ 0.  
\eeno
We can also write
\beno
   \sup_{a\in A^J}  \frac{1}{\tau} (M^{ a,n+1} v^{n+1} - G^{ a,n}(v^n)) = \ccS_M(t_{n+1},x_i,v^{n+1},v).
\eeno
Therefore, by the same argument as in Proposition \ref{prop:mono}($ii$), one obtains for any $n\geq 0$ and for some constants $C\geq 0$:
\beno
   \|u^{n+1}_M - v^{n+1}\|_\infty \leq  (1+C\tau) \|u^{n}_M - v^{n}\|_\infty  + C \tau \|(\ccS_M(t_{n+1},x_i,v^{n+1},v_M))_i \|_\infty.
\eeno
By the consistency assumption (A2') and the fact that $v$ is a classical solution of \eqref{eq:1D}, one has
 $|\cE^v_{\ccS_M}|=|\ccS_M(t_{n+1},x_i,v^{n+1},v)|  \leq c_1 \max(\tau,\dx)$
for some constant $c_1$.
Hence, with $c_2:=C\,c_1$:
\beno
   \|u^{n+1}_M - v^{n+1}\|_\infty \leq  (1+C\tau) \|u^{n}_M - v^{n}\|_\infty  + c_2 \max(\tau,\dx).
\eeno
By recursion, recalling that $u^0_M=v^0$ and using that $1+C\tau\leq e^{C\tau}$, we obtain
\beno
   \|u^n_M-v^n\|_\infty 
        & \leq &  e^{C t_n}(\|u^0_M-v^0\|_\infty + c_2 t_n \max(\tau,\dx)) \\
        & \leq &  c_2 T e^{C T} \max(\tau,\dx). 
\eeno
\end{proof}

We can now give a proof of Theorem \ref{main_teo}.

\begin{proof}[Proof of Theorem \ref{main_teo}]
$(i)$ follows by Theorem~\ref{teo:u-uM} and the fact that 
$$
    \big\|u^n-v^n\big\|_\infty
    \leq \big\|u^n-u^n_M\big\|_\infty + \big\|u^n_M-v^n\big\|_\infty
     \leq Te^{CT}\vare + C_1 \max(\tau,\dx)^\beta.
$$
$(ii)$ is a consequence of Lemma \ref{lem:convSM}. In fact,  as in $(i)$, we have
$$
    \big\|u^n-v^n\big\|_\infty
    \leq \big\|u^n-u^n_M\big\|_\infty + \big\|u^n_M-v^n\big\|_\infty
    \leq Te^{CT}\vare + c_2 T e^{CT} \max(\tau,\dx) 
$$
from which we deduce the desired estimate because of the bound on $\vare$.\\

$(iii)$ The filtered scheme will be equivalent to the high-order scheme ($S_F\equiv S_H$) if 
$$
  \frac{|{S_{H}(v^{n},v^{n-1})_i-S_{M}(v^n)_i}|}{\vare\tau}\leq 1.
$$
Let us first remark that $\frac{1}{\tau}|v^{n+1}_i- S_M(v^n,v^{n-1})_i|$ is of the same order as $\cE_{\ccS_M}^v$. Indeed, being $v$ a classical solution one has 
\beno
\Big|\ccS_M(t_{n+1},x_i,v^{n+1}_i, v)_i\Big| \leq \mathcal E^v_{\ccS_M}(\tau,\Delta x).
\eeno 
Let  $(M^{*,n+1},G^{*,n})$ be the optimal matrix and vector  such that
\beno
   &  & \ccS_M(t_{n+1},x_i,v^{n+1},v)
    \ =\ \frac{1}{\tau} (M^{*,n+1} v^{n+1} - G^{*,n}(v^n))\ =\ \frac{1}{\tau}M^{*,n+1} (v^{n+1} - S_M(v^n)).   
\eeno
Therefore by assumptions (A1)($i$) and (A1)($iii$) one has
\beno
  \Big|\frac{1}{\tau}(v^{n+1} - S_M(v^n))_i\Big| 
   \leq \|(M^{*,n+1} )^{-1}\|_\infty \cE^v_{\ccS_M}(\tau,\Delta x)
   \leq (1+C\tau)\cE^v_{\ccS_M}(\tau,\Delta x) .
\eeno 
Using the high-order consistency assumption (A3) one obtains, for some $k\geq 2$:
\small
\beno
  \frac{|{S_{H}(v^{n},v^{n-1})-S_{M}(v^n)}|}{\eps\tau}
   & \leq & \frac{1}{\eps\tau} \bigg| v^{n+1} - S_M(v^n)\bigg| + \frac{1}{\eps\tau}\bigg|v^{n+1} - S_{H}(v^{n},v^{n-1})\bigg| \\
   & \leq & \frac{1}{\eps} (1+C\tau) |\cE^v_{M}(\tau,\dx)| + \frac{1}{\eps} C \max(\tau,\dx)^k 
\eeno
\normalsize
with $|\cE^v_{M}(\tau,\dx)|\leq C^v_M \max(\tau,\dx)$ and 
$$
  C^v_M = C_M  \bigg(\|v_{tt}\|_\infty  + \sum_{2\leq p\leq 2+q} \|v_{px}\|_\infty \bigg) < c_0.
$$
Therefore $C^v_M/c_0<1$ and for $\max(\tau,\dx)$ sufficiently small, 
\beno
  \frac{|{S_{H}(v^{n},v^{n-1})-S_{M}(v^n)}|}{\eps\tau} 
   & \leq &  \frac{C_M^v}{c_0}(1+C\tau) + C \max(\tau,\dx)^{k-1}\ \leq \ 1.
\eeno
The desired result follows.
\end{proof}

\footnotesize


\normalsize

\section{Numerical tests}\label{sec:Tests}


In this section, we test the filter on three numerical examples which were chosen to illustrate different features.

In the first example, the solution is regular enough for the high order non-monotone scheme to converge and here the filter is not active for appropriately chosen $\varepsilon$ -- 
it ensures convergence without diminishing the accuracy.
In the second example, a non-smoothness in the initial data causes a non-monotone time-stepping scheme to converge to a wrong value, while the filtered scheme corrects the behaviour at the cost of lower order convergence.
Finally, the last example is of a degenerate but smooth two-dimensional diffusion problem, where a local high-order finite difference scheme is necessarily non-monotone, and again the filter can be used to ensure convergence without sacrificing the observed high order. 

\subsection{Example 1: Mean-variance asset allocation problem}\label{sec:test-MV}

We study the mean-variance asset allocation problem 
(see \cite{forsythwang10, ling00,zhouli00}) formulated as follows on $\Omega:=(x_{min},x_{max})\subset\R$:
\be
  \label{meanvar1d}
  & & v_t + \sup_{a \in A} 
    \bigg(-\harf (\sigma  a x)^2 v_{xx} - (c + x (r + a \sigma \xi))v_x\bigg) = 0, \quad t\in(0,T), \ x\in \Omega
  \\
  & & v(0,x)  =  \left(x-\frac{\gamma}{2}\right)^2, \quad x\in \Omega.
  \label{meanvarbc}
\ee
Here, $x$ is the wealth of an investor who controls the fraction $a$ to invest in a risky asset 
in order to minimize the portfolio variance under a return target. 
The above equation assumes a Black-Scholes model with volatility $\sigma$
and Sharpe ratio $\xi$, $r$ the rate on a risk-free bond and $c$ the contribution rate, $\gamma$ a measure of the risk aversion.
We use the parameters from \cite{forsythwang10}, see Table \ref{tab:parammeanvar}.

\begin{table}[!hbtp]
\centering
\begin{tabular}{|c|c|c|c|c|c|c|}
\hline 
$r$ & $\sigma$ & $\xi$ & $c$  & $T$ & $\gamma$ \\
\hline
$0.03$ & $0.15$ & $0.33$ & $0.1$  & $20$ & $14.47$ \\
\hline
\end{tabular}
\caption{Parameters used in numerical experiments for mean-variance problem.}
\label{tab:parammeanvar}
\end{table}

If bankruptcy ($x<0$) is not allowed, the PDE (\ref{meanvar1d}--\ref{meanvarbc}) holds on $\Omega=(0,\infty)$.

As in \cite{forsythwang10}, the control is assumed to take values in the bounded set 
$$
  A := [0, 1.5].
$$

The equation at $x=0$ simplifies to
\begin{eqnarray}
\label{bdypde}
  v_{t}(t,0) - c v_x(t,0) = 0
\end{eqnarray}
(see \cite{forsythwang10} for a discussion), which 
is a pure transport equation and, since $c>0$, there is an influx boundary so that {\em no boundary condition} is needed at $x=0$.

For numerical purposes, we truncate the computational domain to
\beno
  \Omega =(0, 5).
\eeno
As in \cite{reisinger2016piecewise}, at the boundary $x_{max}=5$ we consider the Dirichlet boundary condition obtained with the solution of the 
equation associated with the asymptotic optimal control $a\equiv 0$ (obtained for large values of $x$), leading us to solve $\bar v_t - (c + rx) \bar v_x=0$.
By using the method of characteristics, we obtain the following boundary value:
\beno
  \bar v(t,x_{max}):= v_0\bigg(\frac{e^t(c+ r x_{max})-c}{r}\bigg).
\eeno

Let $(x_j)$ be a uniform mesh on $[x_{min},x_{max}]$: 
$x_j = x_{min}+ j \dx$, $j=0,\dots,J$, with
$$
  \dx=\frac{x_{max}-x_{min}}{J},
$$
and $t_n=n\tau$, $\tau=\frac{T}{N}$.

We consider a filtered scheme using the BDF2 scheme~\eqref{eq:BDF2} as the high order scheme, and 
the implicit Euler scheme~\eqref{eq:IE} as the monotone scheme. 
We choose here 
$\vare :=c_0\max(\tau,\Delta x)$, with $N=J$ and 
therefore $\tau$ and $\dx$ are of the same order (in particular $\tau = 4\Delta x$). 

Figure \ref{fig:nonsmoothsols} shows the approximate shape of the value function and of the optimal control computed with the filtered scheme.
The optimal control is at the upper bound for small $x$, then decreases linearly to zero, the lower bound, and then stays constant at~0.
A loss of regularity can be observed for $x\sim 2.5$ corresponding to a switching point of the control,
as shown  in Figure \ref{fig:nonsmoothsols} (right) where the second order derivative in space has a jump.

The global errors for different norms, obtained with the choice $\vare = c_0 \tau$ and  $c_0 = 5$,  are given 
in Table \ref{tab:bdf_high}. 
Table~\ref{tab:bdf_high10_local} also gives the local errors computed away from the singularity located 
at $x\sim 2.5$ (i.e.\ on $[0,\,5]\backslash[2.3,\,2.7]$).
For the computation of the errors we have used as reference an accurate numerical solution obtained with $N=J=163840$ and with 
the high order scheme.

The scheme shows a clear second order of convergence for the $L^1$- and $L^2$-norms.
The order in the $L^\infty$-norm deteriorates around 
$x\sim 2.5$. 
Away from the singularity, the scheme is also second order in the $L^\infty$-norm.

Here the use of the filter secures (via Theorem \ref{main_teo}) the convergence of the overall approximation towards the viscosity solution, while the BDF2 approximation leads to practically observed second order behavior in both time and space.

We use policy iteration to solve the discretised nonlinear systems, and although convergence is not guaranteed in the case
of the non-monotone scheme (see Remark \ref{rem:existenceBDF}), we did not encounter any problems.


\begin{figure}
\centering
\includegraphics[width=0.49\columnwidth]{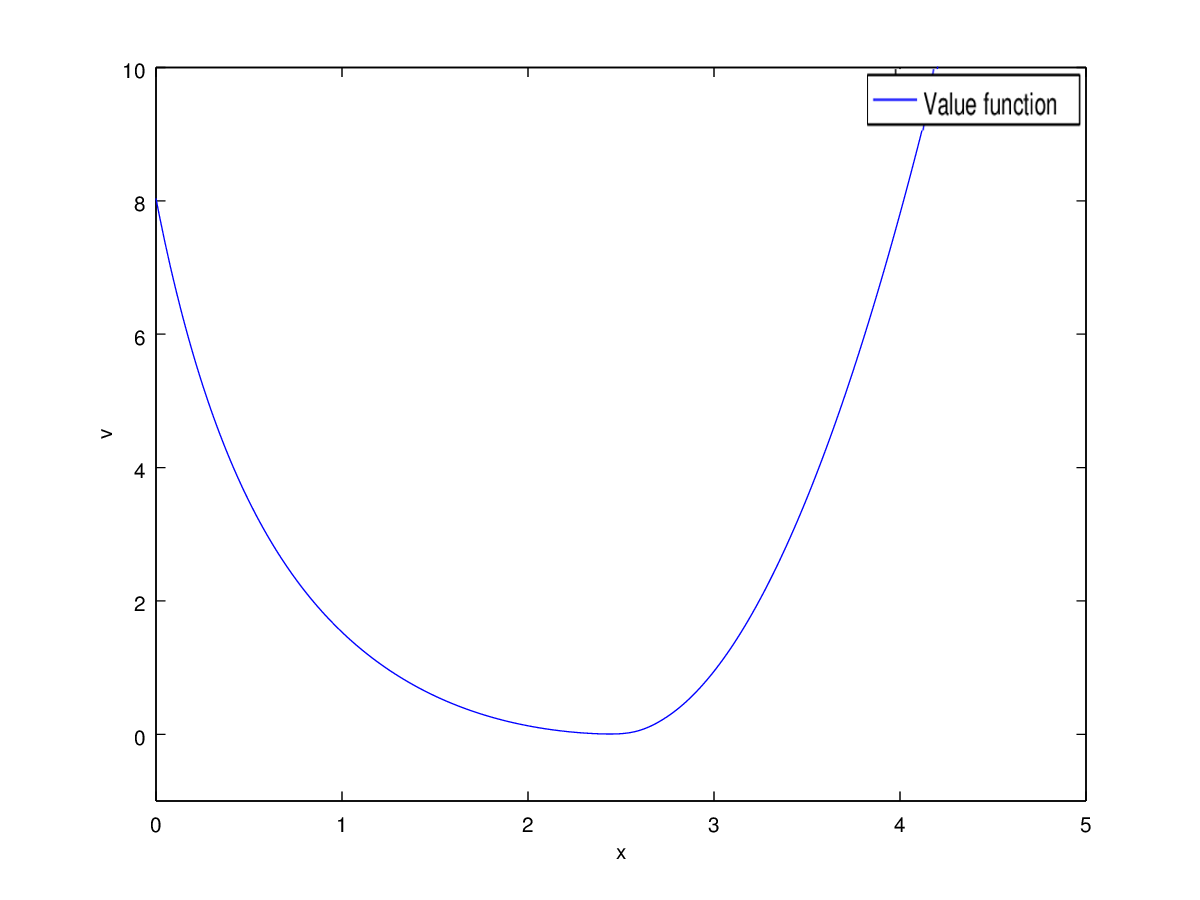} 
\includegraphics[width=0.48\columnwidth]{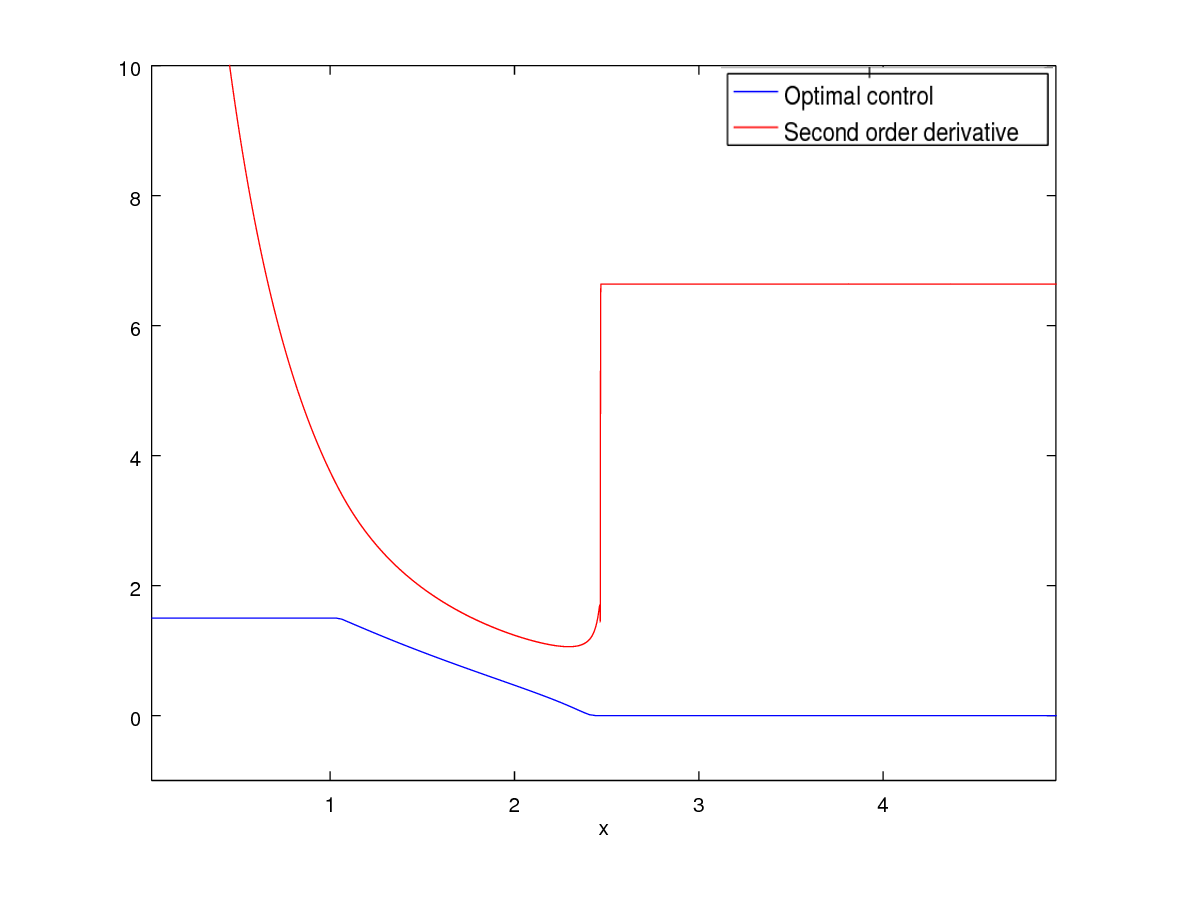}
\caption{(Example 1) Left: Plot of the value function at time $t=T$.
Right: Optimal control (blue)  at time $t=T$, and plot of $v_{xx}(T,\cdot)$ (red).
\label{fig:nonsmoothsols} 
}
\end{figure}

\begin{table}[!hbtp]
\centering
\begin{tabular}{cc|cc|cc|cc|c}
$N$ & $J$ &  Error $L^1$ & order & Error $L^2$ & order & Error $L^\infty$ & order  & CPU (s) \\
\hline   
\hline
   40 &   40  & 2.97E-01 &  1.94  & 1.64E-01 &  1.98  & 2.24E-01 &  1.85 & 0.10 \\ 
   80 &   80  & 7.64E-02 &  1.96  & 4.15E-02 &  1.98  & 5.99E-02 &  1.90 & 0.19 \\ 
  160 &  160  & 1.95E-02 &  1.97  & 1.05E-02 &  1.98  & 1.56E-02 &  1.94 & 0.34 \\ 
  320 &  320  & 4.97E-03 &  1.97  & 2.68E-03 &  1.97  & 4.01E-03 &  1.96 & 0.77 \\ 
  640 &  640  & 1.26E-03 &  1.98  & 6.86E-04 &  1.97  & 1.02E-03 &  1.98 & 1.86 \\ 
 1280 & 1280  & 3.16E-04 &  1.99  & 1.77E-04 &  1.95  & 3.35E-04 &  1.61 & 5.05 \\ 
 2560 & 2560  & 7.93E-05 &  2.00  & 4.62E-05 &  1.94  & 1.35E-04 &  1.31 & 15.43\\ 
 \end{tabular}\caption{
(Example 1) Global errors for the filtered scheme ($c_0=5$).\label{tab:bdf_high}}
\end{table}

\begin{table}[!hbtp]
\centering
\begin{tabular}{cc|cc|cc|cc|c}
$N$ & $J$ &  Error $L^1$ & order & Error $L^2$ & order & Error $L^\infty$ & order & CPU (s) \\
\hline   
\hline
   40 &   40  & 2.85E-01 &  1.96  & 1.63E-01 &  1.99  & 2.24E-01 &  1.85  & 0.10  \\ 
   80 &   80  & 7.13E-02 &  2.00  & 4.07E-02 &  2.00  & 5.99E-02 &  1.90 &  0.19 \\ 
  160 &  160  & 1.80E-02 &  1.99  & 1.02E-02 &  1.99  & 1.56E-02 &  1.94  & 0.34 \\ 
  320 &  320  & 4.52E-03 &  1.99  & 2.57E-03 &  2.00  & 4.01E-03 &  1.96  &  0.77 \\ 
  640 &  640  & 1.12E-03 &  2.01  & 6.42E-04 &  2.00  & 1.02E-03 &  1.98  & 1.86  \\ 
 1280 & 1280  & 2.79E-04 &  2.01  & 1.61E-04 &  2.00  & 2.58E-04 &  1.98 &  5.05  \\ 
 2560 & 2560  & 6.94E-05 &  2.01  & 4.03E-05 &  1.99  & 6.53E-05 &  1.98  &  15.43 \\ 
\end{tabular}\caption{(Example 1) Local errors for the filtered scheme in the subdomain $[0,5]\setminus [2.3, 2.7]$ $(c_0=5)$.
\label{tab:bdf_high10_local}}
\end{table}

\begin{figure}
\begin{minipage}{\columnwidth}
\hspace{0.7cm}
\begin{minipage}{0.9\columnwidth}
\floatbox[{\capbeside\thisfloatsetup{capbesideposition={right,center},capbesidewidth=6cm}}]{figure}
{
\caption{(Example 1) Speed of convergence for the filtered BDF2 scheme, with $\tau=4\dx$, 
and for different values of $c_0$ (errors estimated at $x=1$)
\label{fig:bdf_eps}
}}
{\hspace{-2.3 cm} \includegraphics[width=1.5\columnwidth]{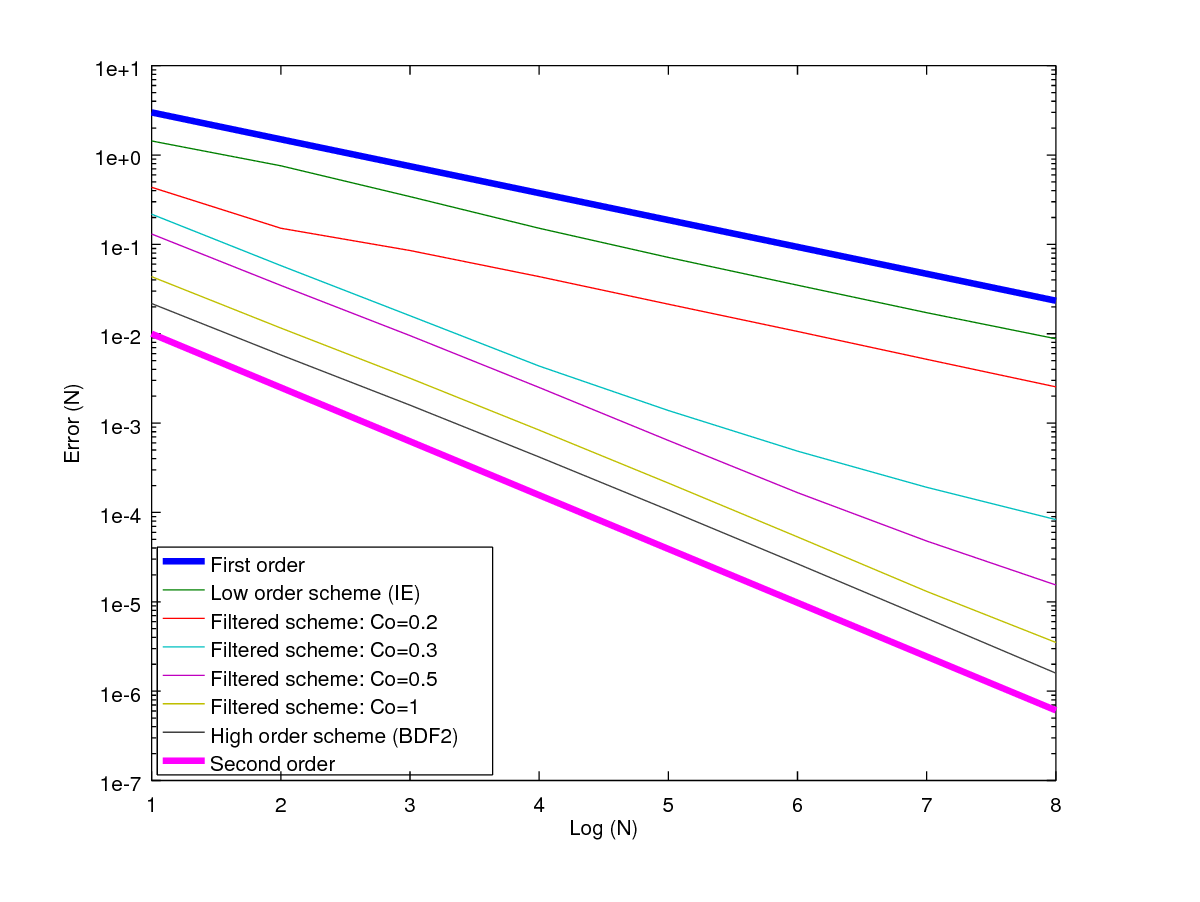}}
\end{minipage}
\end{minipage}
\end{figure}


Figure \ref{fig:bdf_eps} shows the speed of convergence of the scheme depending on the values of~$c_0$
(in this figure the errors are computed only at the point $x=1$). 
The thick blue and red lines correspond to order one and two, respectively.
For small values of~$c_0$, 
after some refinements we observe convergence of order one. 
Applying the analysis of Remark~\ref{rem:choice_eps} to this case,
in order to obtain high order we should choose $c_0$ to be such that, roughly,
\be 
  c_0 > C^v_M,
\ee
where $C^v_M$ is the constant that appears in the truncation error \eqref{eq:truncerror} of the monotone scheme.

For the implicit Euler (the monotone) scheme, the following bound is easily obtained, assuming $v$ is sufficiently regular:
\beno
  |\cE^v_{\ccS_M}(\tau,\Delta x)|\leq 
     \fud \|v_{tt}\|_\infty\tau
   + \fud \|(c + x (r + a_{\max} \sigma \xi)) v_{xx}\|_\infty \dx
   + \hbox{$\frac{1}{24}$} \|\sigma^2 a_{max}^2 x^2 v_{4x} \|_\infty \dx^2.
\eeno
Hence, neglecting the $\dx^2$ term, we obtain a bound in the form of \eqref{eq:truncerror}, i.e., 
  $|\cE^v_{\ccS_M}|\leq C^v_M \tau$ for $\tau = 4 \dx$, 
 with the constant 
$$
  C^v_M = \fud \|v_{tt}\|_\infty + \fud \|(c + x (r + a_{\max} \sigma \xi)) v_{xx}\|_\infty.
$$
Approximating the derivatives on the right-hand side 
by using the numerical solution, 
one finds the rough upper bound $C^v_M \leq 40$.
This means that we should choose $c_0\geq 40$ (and $\vare=c_0\max(\tau,\dx)$ for the filter)
in order to ensure the local error to be of high order.
However for all reported results 
we have numerically observed that there was
practically no difference for $5 \leq c_0 \leq 40$.

\subsection{Example 2: Butterfly option pricing with uncertain volatility}\label{sec:UV}

We consider the price of a financial option under a Black-Scholes model with uncertain volatility \cite{lyons1995uncertain},
with the worst scenario for a ``butterfly spread''
with respect to all the possible values of the  volatility $\sigma$ in the interval $[\sigma_{\min},\sigma_{\max}]$.   
This problem is associated with the following PDE in $\R_+ \times (0,T)$ for some expiry $T>0$:
\be
  v_t+\sup_{\sigma\in\{\sigma_{\min},\sigma_{\max}\}}\Big(-\frac{\sigma^2}{2} x^2 v_{xx} \Big)-r x v_x+r v & = & 0, \\ 
  v(0,x) & = &  v_0(x),
\ee
where 
$$
  v_0(x) :=  \max(x-K_1,0)-2\max(x-(K_1+K_2)/2,0)+\max(x-K_2,0).
$$

We follow the example presented in \cite{pooleyetal},
with parameters given by Table~\ref{tab:paramUV}, on the computational domain $[0,200]$,
using the same non-uniform space grid,
shown in Figure~\ref{fig:non-uniform}. 
The corresponding number of grid points is $J=60\times2^k$, for $k=0,1,2,\ldots$.
The time steps are uniform, given by $N=25\times 2^k$ and $\tau=\frac{T}{N}$.


\begin{figure}[!hbtp]
\footnotesize
\begin{tikzpicture}[scale=0.74]
\vspace{1cm}
\hspace{-0.2cm}
\tkzDefPoint(0,0){A}
\tkzDefPoint(4,0){B}
\tkzDefPoint(8,0){C}
\tkzDefPoint(8.8,0){D}
\tkzDefPoint(9.8,0){E}
\tkzDefPoint(10,0){F}
\tkzDefPoint(10.2,0){G}
\tkzDefPoint(11.2,0){H}
\tkzDefPoint(12,0){I}
\tkzDefPoint(16,0){L}
\tkzDefPoint(20,0){M}
\node[] at ([yshift=10pt]A){0};
\node[] at ([yshift=10pt]B){40};
\node[] at ([yshift=10pt]C){80};
\node[] at ([yshift=10pt]D){88};
\node[] at ([yshift=10pt,xshift=-2pt]E){98};
\node[] at ([yshift=10pt,xshift=2pt]G){102};
\node[] at ([yshift=10pt]H){112};
\node[] at ([yshift=10pt]I){120};
\node[] at ([yshift=10pt]L){160};
\node[] at ([yshift=10pt]M){200};
\draw[-,>=latex,line width=0.5mm] (A)--(M);
\draw[-,>=latex,line width=0.5mm] (0,0.1)--(0,-0.1);
\draw[-,>=latex,line width=0.5mm] (4,0.1)--(4,-0.1);
\draw[-,>=latex,line width=0.5mm] (8,0.1)--(8,-0.1);
\draw[-,>=latex,line width=0.5mm] (8.8,0.1)--(8.8,-0.1);
\draw[-,>=latex,line width=0.5mm] (9.8,0.1)--(9.8,-0.1);
\draw[-,>=latex,line width=0.5mm] (10.2,0.1)--(10.2,-0.1);
\draw[-,>=latex,line width=0.5mm] (11.2,0.1)--(11.2,-0.1);
\draw[-,>=latex,line width=0.5mm] (12,0.1)--(12,-0.1);
\draw[-,>=latex,line width=0.5mm] (16,0.1)--(16,-0.1);
\draw[-,>=latex,line width=0.5mm] (20,0.1)--(20,-0.1);
\draw[<->] ([yshift=-15pt,xshift=5pt]A)--([yshift=-15pt,xshift=-5pt]B);
\draw[<->] ([yshift=-15pt,xshift=5pt]B)--([yshift=-15pt,xshift=-5pt]C);
\draw[<->] ([yshift=-15pt,xshift=1pt]C)--([yshift=-15pt,xshift=-1pt]D);
\draw[<->] ([yshift=-15pt,xshift=2pt]D)--([yshift=-15pt,xshift=-8pt]F);
\draw[<->] ([yshift=-15pt,xshift=-6pt]F)--([yshift=-15pt,xshift=0pt]G);
\draw[<->] ([yshift=-15pt,xshift=2pt]G)--([yshift=-15pt,xshift=-2pt]H);
\draw[<->] ([yshift=-15pt,xshift=0pt]H)--([yshift=-15pt,xshift=0pt]I);
\draw[<->] ([yshift=-15pt,xshift=5pt]I)--([yshift=-15pt,xshift=-5pt]L);
\draw[<->] ([yshift=-15pt,xshift=5pt]L)--([yshift=-15pt,xshift=-5pt]M);
       
\tkzDefPoint(2,0){A1}
\tkzDefPoint(6,0){B1}
\tkzDefPoint(8.4,0){D1}
\tkzDefPoint(9.3,0){E1}
\tkzDefPoint(10,0){F1}
\tkzDefPoint(10.7,0){G1}
\tkzDefPoint(11.6,0){H1}
\tkzDefPoint(14,0){I1}
\tkzDefPoint(18,0){L1}
\node[] at ([yshift=-28pt]A1){\scriptsize$\frac{10}{2^k}$};
\node[] at ([yshift=-28pt]B1){\scriptsize$\frac{5}{2^k}$};
\node[] at ([yshift=-28pt]I1){\scriptsize$\frac{5}{2^k}$};
\node[] at ([yshift=-28pt]D1){\scriptsize$\frac{2}{2^k}$};
\node[] at ([yshift=-28pt]H1){\scriptsize$\frac{2}{2^k}$};
\node[] at ([yshift=-28pt]E1){\scriptsize$\frac{1}{2^k}$};
\node[] at ([yshift=-28pt]G1){\scriptsize$\frac{1}{2^k}$};        
\node[] at ([yshift=-28pt]F1){\scriptsize$\frac{0.5}{2^k}$};        
\node[] at ([yshift=-28pt]L1){\scriptsize$\frac{10}{2^k}$};
\node[] at ([yshift=-26pt,xshift=0pt]A){\footnotesize$\Delta x$};
\vspace{1cm}
\end{tikzpicture}
\normalsize
\caption{\label{fig:non-uniform} (Example 2) Non-uniform mesh steps for $k=0,1,2,\dots$.}
\end{figure}
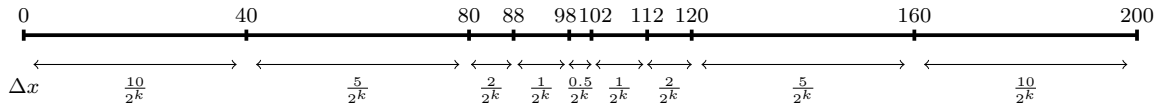


\begin{table}[!hbtp]
\centering
\begin{tabular}{|c|c|c|c|c|c|}
\hline 
$r$ & $\sigma_{\min}$ & $\sigma_{\max}$ &  $T$ & $K_1$ & $K_2$\\
\hline
$0.1$ & $0.15$ & $0.25$ & $0.1$  & $90$ & $110$ \\
\hline
\end{tabular}
\caption{Parameters used in numerical experiments for uncertain volatility  problem.}
\label{tab:paramUV}
\end{table}

Figure~\ref{fig:CNvsFilter} 
shows the result obtained using the CN scheme (left) as in \cite{pooleyetal},
the corresponding result obtained adding a filter to the CN scheme (center), and the BDF2 scheme (right).
The monotone scheme used for filtering is the standard implicit Euler finite difference scheme \eqref{eq:IE}. 

In \cite{pooleyetal}, it is shown that, in absence of a CFL condition constraining $\tau$ 
to be of the order of $\Delta x^2$, the Crank-Nicolson (CN) 
scheme may not converge to the unique viscosity solution of the problem. 
This is also illustrated in Figure~\ref{fig:CNvsFilter} (left).
We refer to \cite{pooleyetal} 
for the details of the CN scheme used.

Numerical results are given in Table~\ref{tab:UV_comparison},
for the CN scheme,
the ``CN-Rannacher'' scheme (see below),
the BDF2 scheme~\eqref{eq:BDF2},
the filtered CN scheme (with $\vare:=50\dx_{\min}$)
and the filtered BDF2 scheme~\eqref{eq:BDF2} (with $\vare:=50 \dx_{\min}$). 
The table reports the $L^\infty$-norm of the error using as reference an accurate numerical solution computed with the BDF2 scheme with $N=12800$ and $J=30720$. Here the CN scheme is not convergent to the correct value. However, 
by considering the error estimated as differences of two successive values obtained at $x=100$,
and the corresponding ``order'' of convergence (see Table~\ref{tab:UV_comparison}), 
one may observe a slow convergence
-- towards a wrong value, as already explained in~\cite{pooleyetal}.

The CN-Rannacher scheme corresponds to a CN scheme with Rannacher time-stepping, i.e., 
the fully implicit Euler scheme is used for computing $u^1$ and $u^2$ (first two steps $n=0,1$), then it switches to the CN scheme 
(for computing $u^{n+1}$ for steps $n\geq 2$), as in \cite{pooleyetal}.

On this example, the filter is able to correct the wrong behavior of the CN scheme, but
the overall order of convergence is not greater than one.
This is due to the fact that the filter is applied in a quite wide area, as seen in Figure \ref{fig:CNvsFilter},
where the dots on the $x$-axis in the middle panel indicate the mesh points where the filter is active.
In particular, the measure of the area where the filter 
is applied does not diminish as the mesh is refined and therefore the contribution to the overall error 
from the low order scheme dominates.

Table \ref{tab:UV_comparison} also compare the orders of convergence. 
Both the CN-Rannacher scheme and the BDF2 scheme appear to converge to the correct viscosity solution with second order.
However, for these last two schemes, because of the lack of monotonicity, convergence is not theoretically established.

In order to ensure convergence to the viscosity solution, we can apply the filter to the BDF2 scheme. 
The convergence for different values of $c_0$ is shown in Figure \ref{fig:UVeps}.
However, for this example, 
the unboundedness of the derivatives for $t \downarrow 0$ and the resulting unboundedness 
of the constant $C^v_M$ do not allow high order of convergence in the relevant region of the domain.

\begin{figure}
\centering{$t=0$}\\
\includegraphics[width=0.3\columnwidth]{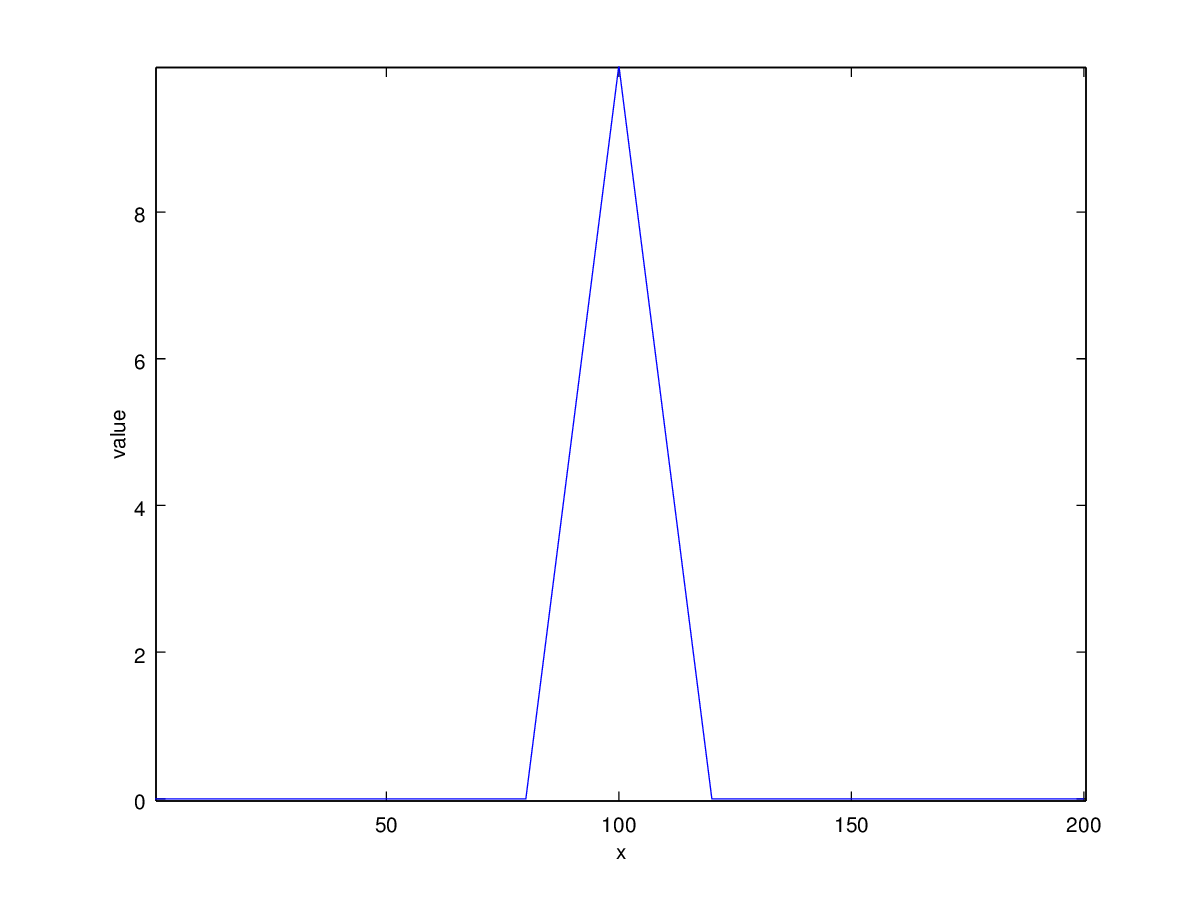}
\includegraphics[width=0.3\columnwidth]{payoff_butt.png}
\includegraphics[width=0.3\columnwidth]{payoff_butt.png}\\
\centering{$t=\tau$}\\
\includegraphics[width=0.3\columnwidth]{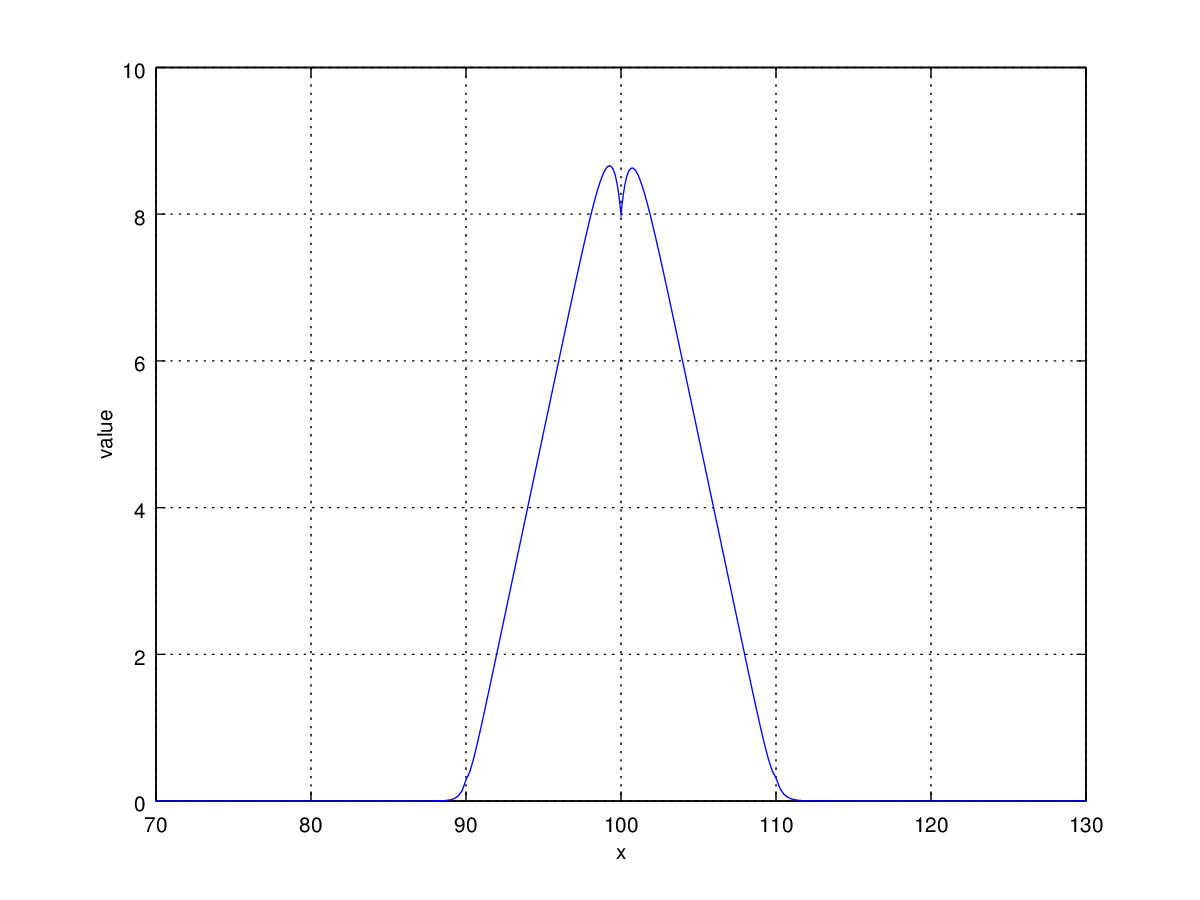}
\includegraphics[width=0.3\columnwidth]{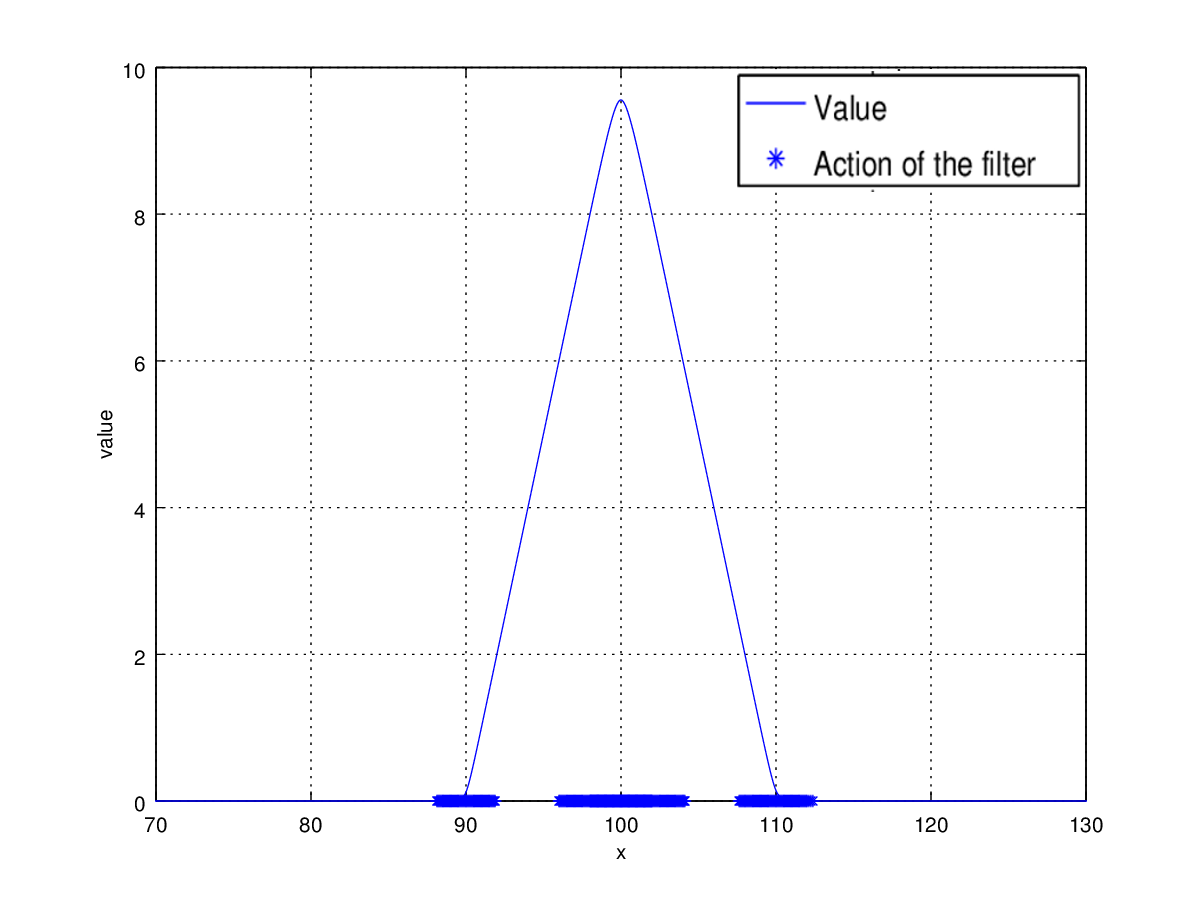}
\includegraphics[width=0.3\columnwidth]{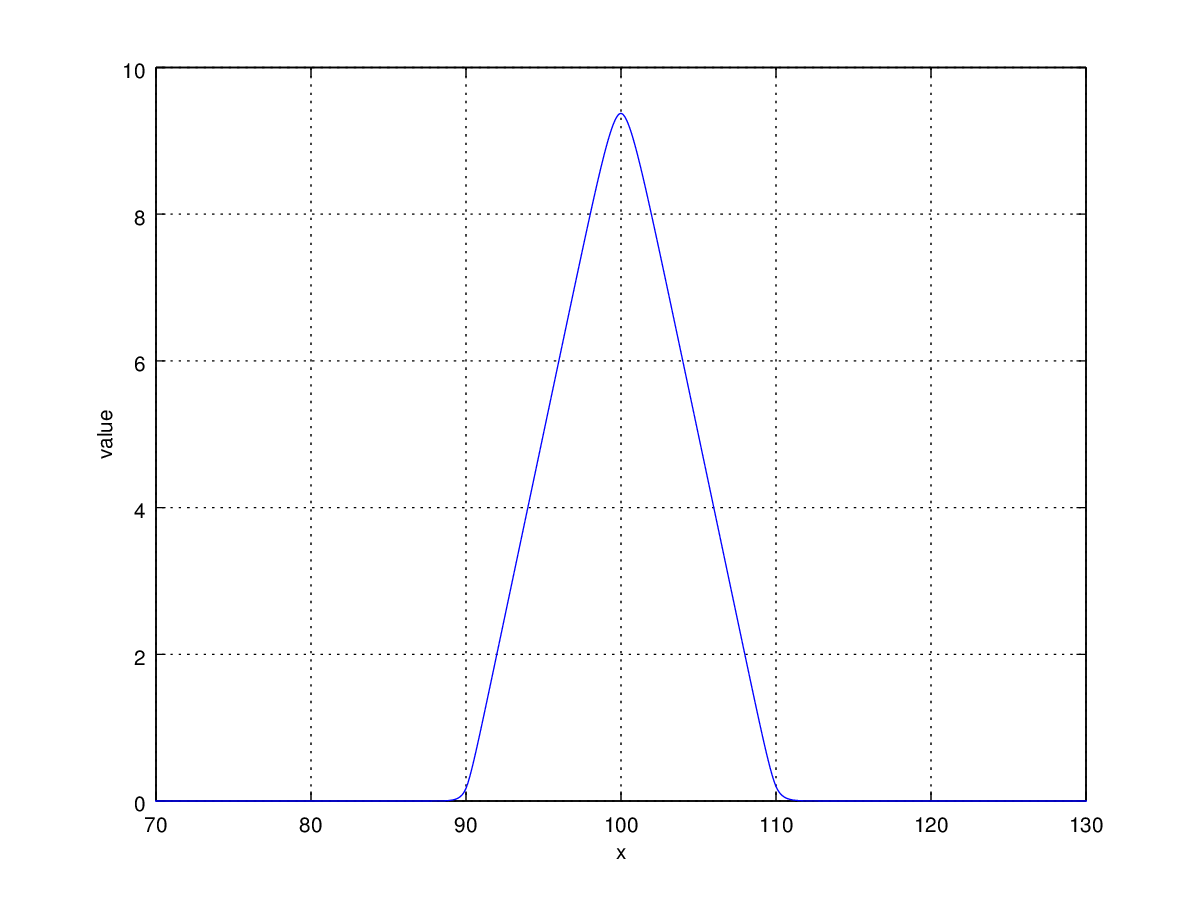}\\
{\centering $t=T$}\\
\includegraphics[width=0.3\columnwidth]{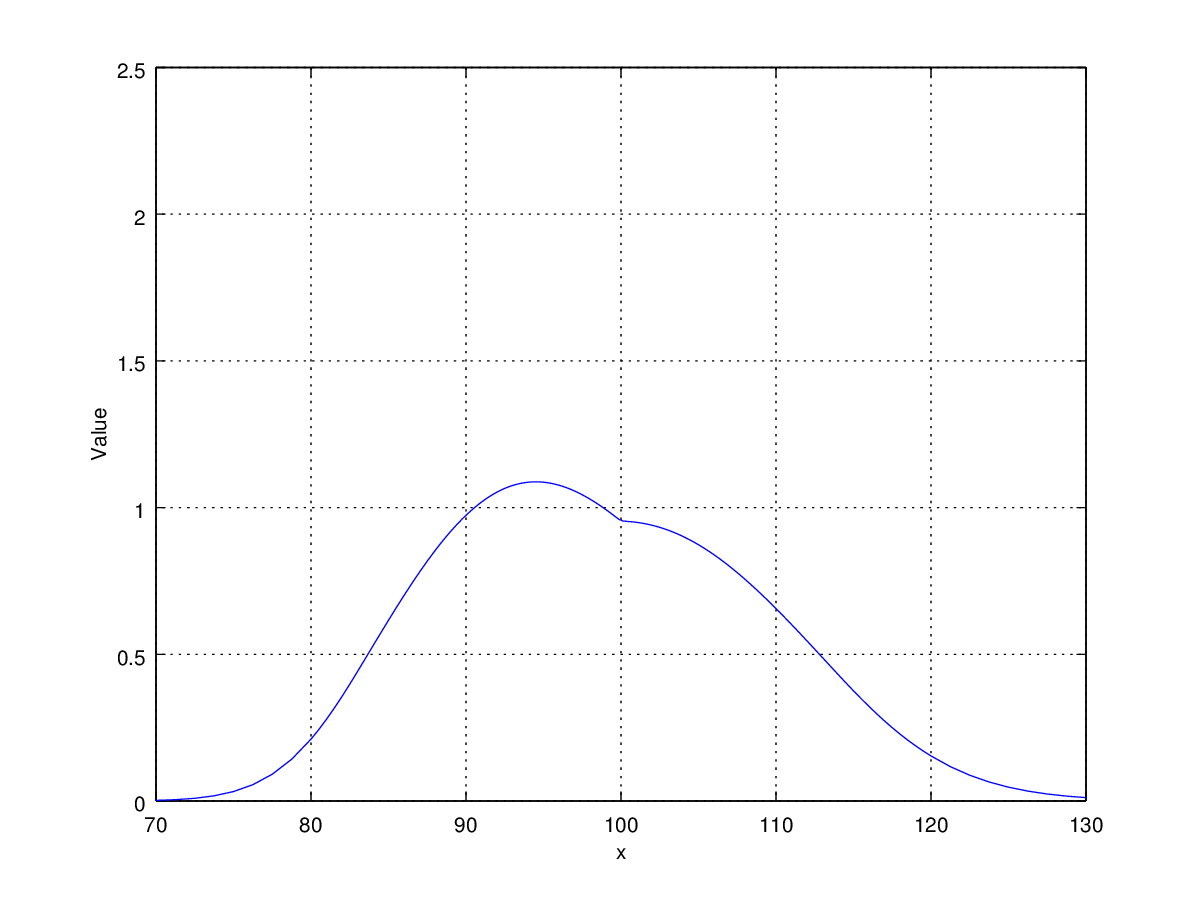}
\includegraphics[width=0.3\columnwidth]{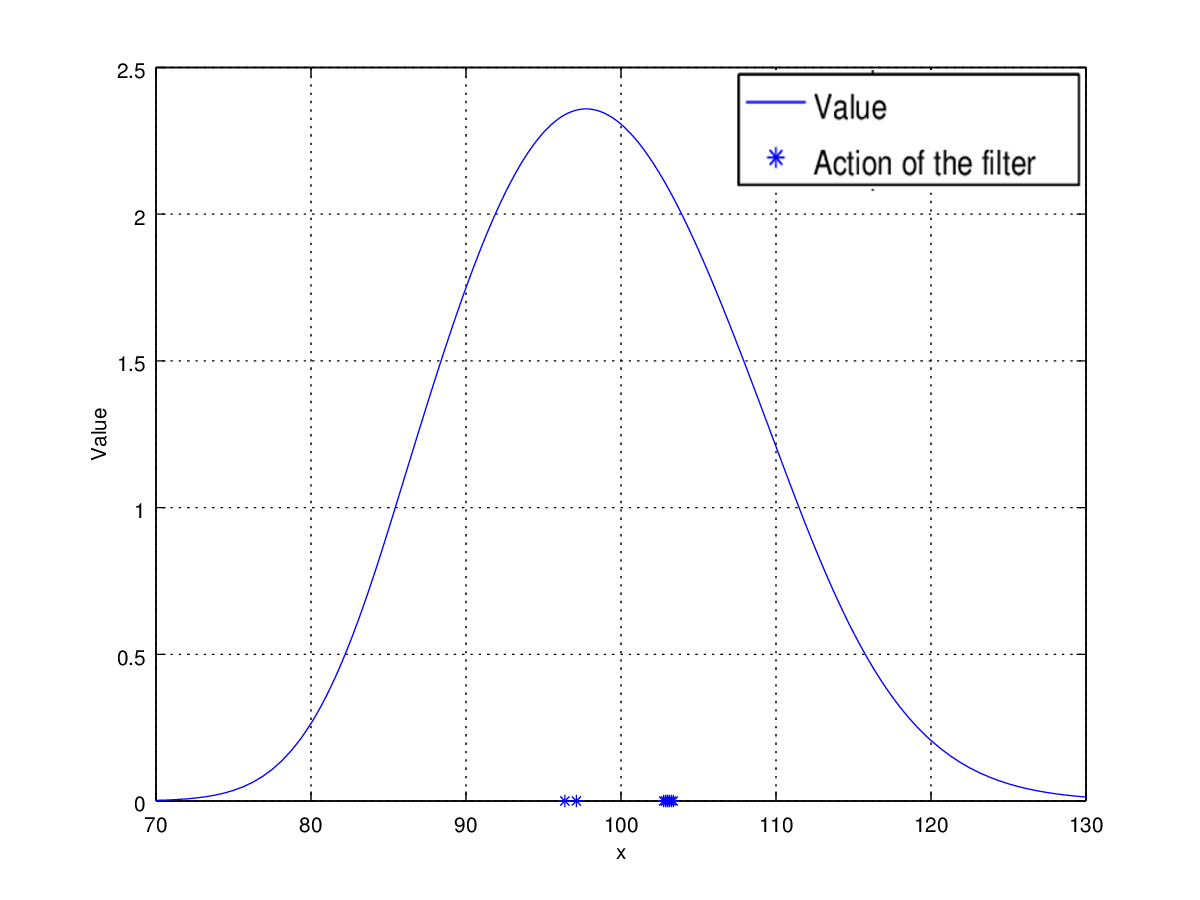}
\includegraphics[width=0.3\columnwidth]{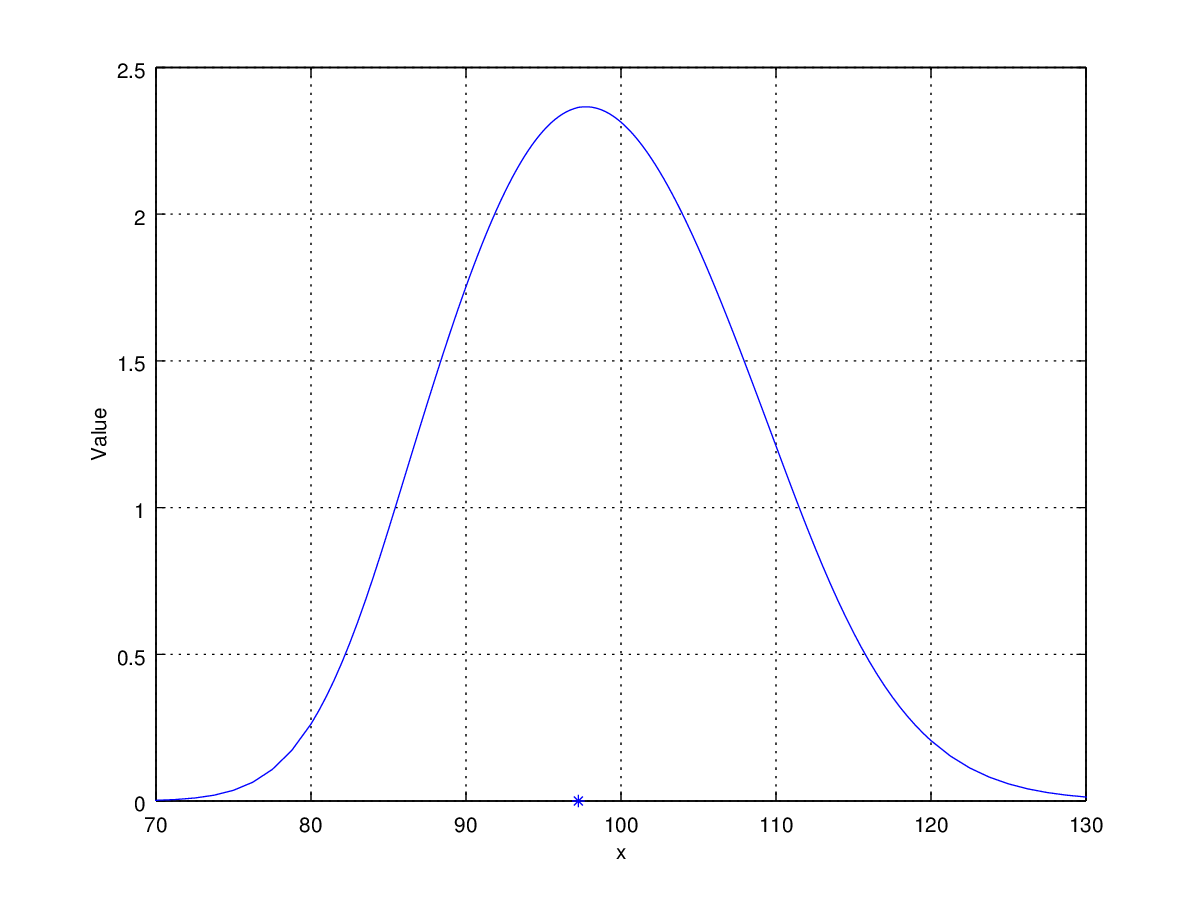}
\caption{(Example 2) From top to bottom: value function at $t=0$ (the payoff), after one time-step $t=\tau$, and at terminal time $t=T$.
From left to right: CN scheme, CN+filter scheme, BDF scheme.
 \label{fig:CNvsFilter}}
\end{figure}

\begin{small}
\begin{table}[!hbtp]
\centering
\footnotesize
\begin{tabular}{c|cc|cc|cc|cc|cc}
  $N$ 
   & \multicolumn{2}{|c|}{CN}
   & \multicolumn{2}{|c|}{CN-Rannacher}  
   & \multicolumn{2}{|c|}{BDF2} 
   & \multicolumn{2}{|c|}{CN+filter} 
   & \multicolumn{2}{|c }{BDF2+filter} \\
\hline\hline
       \if{&   --}\fi  & value & ``order''   & error    & order & error    & order & error     & order &  error   &  order \\
    25 \if{&   60}\fi  & 1.884 & -           & 3.38E-02 & -     & 3.19E-02 & -     & 5.58E-01  & -     & 3.19E-02 & -      \\
    50 \if{&  120}\fi  & 1.060 & -           & 9.51E-03 & 1.82  & 9.53E-03 & 1.74  & 3.34E-01  & 0.74  & 9.50E-03 &   1.75 \\
   100 \if{&  240}\fi  & 0.957 & 0.32        & 2.38E-03 & 1.99  & 2.58E-03 & 1.88  & 1.10E-01  & 1.60  & 2.88E-03 &   1.72 \\
   200 \if{&  480}\fi  & 0.884 & 0.50        & 5.94E-04 & 2.00  & 6.71E-04 & 1.94  & 5.10E-02  & 1.11  & 1.07E-03 &   1.43 \\
   400 \if{&  960}\fi  & 0.835 & 0.55        & 1.48E-04 & 2.00  & 1.71E-04 & 1.97  & 2.01E-02  & 1.34  & 3.79E-04 &   1.50 \\
   800 \if{& 1920}\fi  & 0.802 & 0.58        & 3.69E-05 & 2.00  & 4.30E-05 & 1.99  & 1.87E-02  & 0.10  & 1.97E-04 &   0.95 \\
  1600 \if{& 3840}\fi  & 0.780 & 0.62        & 9.11E-06 & 2.01  & 1.07E-05 & 2.00  & 1.48E-02  & 0.34  & 9.84E-05 &   1.00 \\
  3200 \if{& 7680}\fi  & 0.767 & 0.64        & 2.15E-06 & 2.08  & 2.55E-06 & 2.06  & 8.74E-03  & 0.76  & 5.04E-05 &   0.97
\end{tabular}
\caption{\label{tab:UV_comparison}
(Example 2) Orders of convergence for the CN scheme (convergent towards a wrong solution),
the CN scheme with Rannacher time-stepping,
the BDF2 scheme,
the CN scheme with filter, 
and the BDF2 scheme with filter ($\vare = 50\dx_{\min}$).
Here using $N=25\times 2^k$ and $J=60\times 2^k$, $k=0,1,2...$.
}
\normalsize
\end{table}
\end{small}


\if{

\begin{small}
\begin{table}[!hbtp]
\centering
\begin{footnotesize}
\begin{tabular}{cc|cc|cc}
  $N$ & $J$ 
  & \multicolumn{2}{|c}{IE}
  & \multicolumn{2}{|c}{BDF2 + filter}
  \\ 
  \hline \hline 
       &      &   error &  order  & error       & order \\  
%
    25 &   60 & 3.18E-02  &   -    & 5.53E-02  & -         \\
    50 &  120 & 2.90E-02  & 0.93   & 9.53E-03  &   1.74    \\
   100 &  240 & 1.48E-02  & 0.96   & 2.58E-03  &   1.88    \\
   200 &  480 & 7.53E-03  & 0.98   & 6.70E-04  &   1.94    \\
   400 &  960 & 3.79E-03  & 0.99   & 5.92E-04  &   0.18    \\
   800 & 1920 & 1.90E-03  & 0.99   & 3.49E-04  &   0.76    \\
  1600 & 3840 & 9.52E-04  & 1.00   & 8.31E-05  &   2.07    \\
  3200 & 7680 & 4.76E-04  & 1.00   & 4.02E-05  &   1.05    
\end{tabular}
\end{footnotesize}
\caption{
\label{tab:filteredBDF_VS_IE}
(Example 2)
\MODIF{ $L^\infty$-error and order of convergence for the IE scheme and the filtered BDF2 scheme ($\vare=500\Delta x_{\min}$).
Both schemes have first order of convergence,
but we can observe that the absolute error is almost by a factor of 10 lower than for the pure low order scheme.} 
\QUESTION{PUT ALL FILTER SCHEMES HERE ? - waiting for tables for $c_0=10$ for both CN and BDF2}
}
\end{table}
\end{small}

}\fi

\begin{figure}
\begin{minipage}{\columnwidth}
\hspace{0.7cm}
\begin{minipage}{0.9\columnwidth}
\floatbox[{\capbeside\thisfloatsetup{capbesideposition={right,center},capbesidewidth=6cm}}]{figure}
{\caption{(Example 2) Convergence rate of the $L^\infty$-error obtained for $\vare = c_0 \Delta x_{\min}$ 
and different values of $c_0$, using the non uniform mesh defined in \cite{pooleyetal}. The error is computed by comparison with an accurate numerical solution obtained for $N=12800$ and $J=30720$. \label{fig:UVeps}} }
{\hspace{-2.3 cm} \includegraphics[width=1.5\columnwidth]{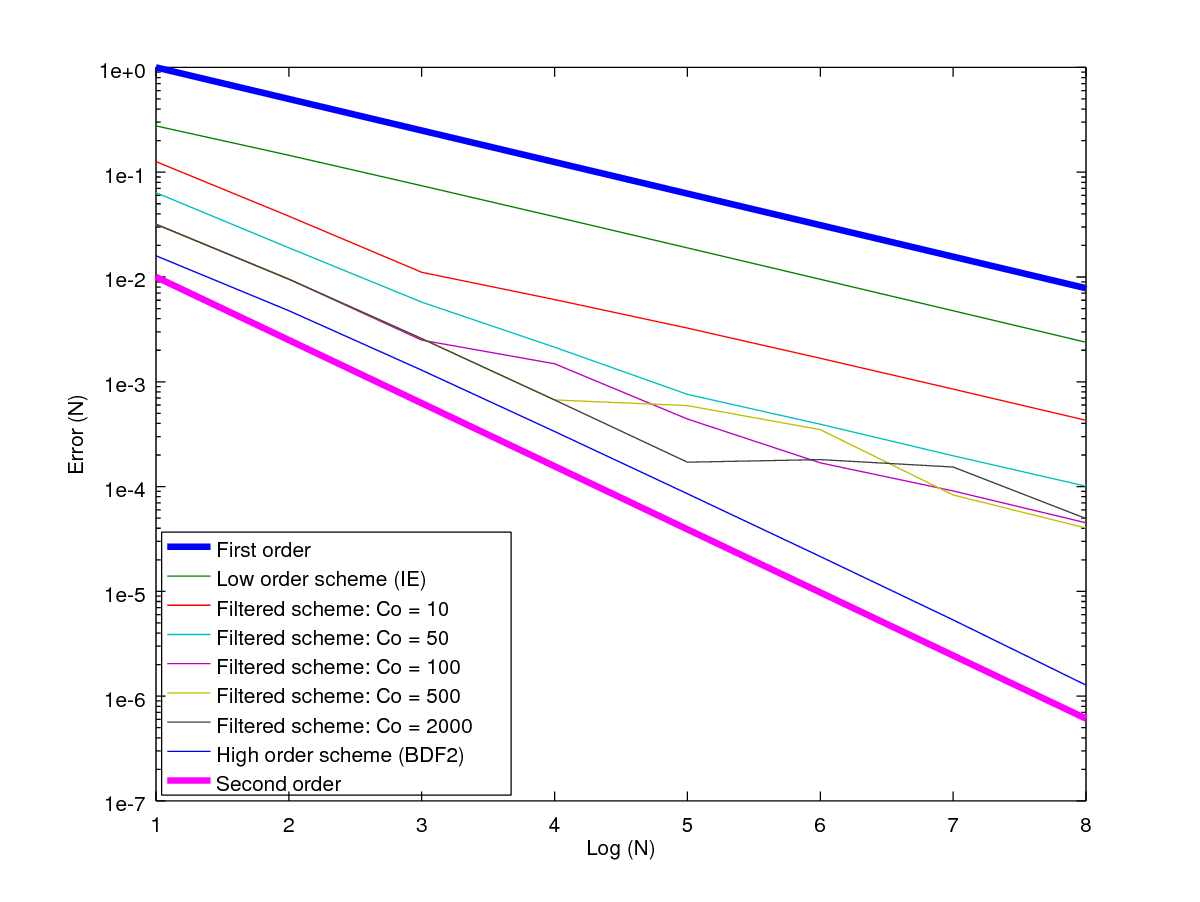}}
\end{minipage}
\end{minipage}
\end{figure}

\subsection{Example 3: A two-dimensional example}\label{sec:2d}
 
We now test a filtered scheme on a two-dimensional example set on $\Omega=(-\pi,\pi)^2$:
\begin{subequations}
\be
  v_t + \sup_{a \in A} \Big(-\frac{1}{2}Tr\big(\ms_a\ms_a^T D^2_x v\big) 
   - \ell(t,x,a) \Big)\ =\ 0, &&\quad t\in(0,T), x\in \Omega\label{eq:2d1} 
  \\
 v(0,x)\ =\ 2\sin x_1\sin x_2, && \quad x\in\Omega\label{eq:2d2}
\ee
\end{subequations}
with periodic boundary conditions, $T:=0.5$,
$$
  A:= \{a=(a_1,a_2)\in \R^2: a_1^2+a_2^2 = 1\},
$$
$$
  \ms_a := \sqrt{2}\left(\begin{array}{c} a_1 \\ a_2\end{array}\right), 
  \qquad \ell(t,x,a) := (1-t)\sin x_1\sin x_2 + (2-t)(a_1^2 \cos^2 x_1+a_2^2 \cos^2 x_2 ).
$$ 
This problem has the following exact solution 
$$
  v(t,x) = (2-t) \sin x_1 \sin x_2.
$$
This example is similar -- but more complex as concerns the solution of the optimization problem -- to the one discussed in \cite[Section 9.3 (B)]{deb-jak-12}. 
In our case, the minimization over $a$ is non-trivial, in contrast to \cite{deb-jak-12}, where the problem reduces to a linear one and the solution satisfies the PDE for any control -- not only for the optimal one.

Because of the  presence of cross-derivatives and the fact that the diffusion matrix 
$\ms_a \ms_a^T$ 
may not be diagonally dominant, standard  finite difference schemes (such as the seven-point stencil) are generally not monotone. 
Therefore, for the monotone scheme we consider here a semi-Lagrangian scheme (as in \cite{deb-jak-12}).

Let $N\geq 1$ and let $\tau = T/N$ be the time step. We use a uniform space mesh $x_{ij}=~(x_{1,i},x_{2,j})$ in the two dimensions with mesh steps 
$\dx_1,\dx_2$ such that
$$
  \Delta x_1 = \Delta x_2 = \frac{2\pi}{J}.
$$

The monotone scheme gives an approximation $u^{n}_{ij}$ of $v(t_n,x_{1,i},x_{2,j})$ as in \eqref{eq:SL1}, i.e.
\beno
  \frac{u^{n+1}_{ij} - u^n_{ij}}{\tau} = \underset{a\in {A}}\inf
  \bigg(\frac{[u^n](x_{ij} + \sqrt{\tau}\ms_a)-2u^n_{ij} +[u^n](x_{ij} - \sqrt{\tau}\ms_a)}{2 \tau}+\ell(t_n,x_{ij},a) \bigg)
\eeno
or, equivalently (as in \eqref{eq:SL2})
\beno
   u^{n+1}_{ij} =  \underset{a\in {A}} \inf\bigg( \harf \sum_{\eps=\pm 1} [u^n](x_{ij} + \eps \sqrt{\tau}\ms_a) 
   +  \tau \ell(t_n,x_{ij},a) \bigg)
\eeno
where $[\ \cdot \ ]$ stands for the bilinear -- $Q_1$ -- interpolation,
and $u^0_{ij}=v(0,x_{ij})$. \\


The infimum is approximated using the following discretization of the controls, replacing ${A}$ by
\be \label{eq:discrTheta}
{A}_P = \{a_k,\  0\leq k\leq P-1\}, \quad a_k = \left(\cos\left(\frac{2k\pi}{P}\right), \sin\left(\frac{2k\pi}{P}\right)\right),
\ee
for some $P\in \N$, $P\geq 1$, and we denote also $\da:=\frac{2\pi}{P}$ a control mesh step.

\if{
\begin{rem}\label{rem:linearity2d}
In the case of \eqref{eq:2d1}-\eqref{eq:2d2}, the solution is linear in time, and since the optimal control depends on $x$ but not on $t$, 
this behaviour is inherited by the numerical scheme.
Therefore the following formula holds: 
\begin{equation*}
\frac{u^{N} - u^0 }{T}=  \underset{a\in {A}}\inf\Big\{\frac{[u^0](x + k\sigma(a))-2u^0
 +[u^0](x - k\sigma(a))}{{2k^2}}+\ell(t_0,x,a)\Big\},
\end{equation*}
which allows to compute directly $u^N$ in this particular case.
\end{rem}
}\fi

The high order scheme we consider here is an implicit finite difference scheme
based on the following naive approximation of the second order derivatives for $\phi^n_{ij} \equiv \phi(t_n,x_{1,i},x_{2,j})$:
\beno
  \partial^2_{xx} \phi_{ij}:=\frac{\phi_{i+1,j} - 2 \phi_{i,j} + \phi_{i-1,j}}{\Delta x^2},\qquad 
  \partial^2_{yy} \phi_{ij}:=\frac{\phi_{i,j+1} - 2 \phi_{i,j} + \phi_{i,j-1}}{\Delta y^2},
\eeno
\beno
  \partial^2_{xy} \phi_{ij}:=\frac{\phi_{i+1,j+1} -  \phi_{i+1,j-1} + \phi_{i-1,j-1} - \phi_{i-1,j+1}}{4\Delta x\Delta y}.
\eeno
Hence, denoting
$\partial^2 u:= \begin{pmatrix} \partial^2_{xx} u & \partial^2_{xy} u \\
                                \partial^2_{xy} u & \partial^2_{yy} u \end{pmatrix}$,
the scheme is
\be \label{eq:2d-FD}
  \frac{u^{n+1}_{ij}- u^n_{ij}}{\tau} + \sup_{a\in A_P} \Big(- \fud Tr\big(\ms_a \ms_a^T\, \partial^2 u^{n+1}\big)_{ij}
    + \ell(t_{n+1},x_{ij},a) \Big)  = 0.
\ee

The problem with the monotonicity of the high-order scheme here
does not come from the timestepping scheme, but only from 
the finite difference approximation of the spatial derivatives.

\if{
\MODIF{
As pointed out in Remark \ref{rem:linearity2d}, it also holds, for $u^N$, the following formula 
\be\label{eq:2dFD}
\frac{u^N - u^0}{T}=\underset{a\in {A}}\inf\ \Big\{a_1^2 D^2_{xx} u^N +2 a_1 a_2 D^2_{xy} u^N +a_2^2  D^2_{yy} u^N +\ell(t_N,x,a)  \Big\}.
\ee
}
}\fi

Equation \eqref{eq:2d-FD} is solved  by policy iteration \cite{bok-mar-zid-09}. 
Even though we have no proof of convergence of the policy iteration
algorithm in this setting, we have numerically observed fast convergence. 

In order to find a suitable value of $c_0$ for the filtered scheme (see Remark~\ref{rem:choice_eps}), the constant $C_{M}$ 
that appears in the truncation error of the monotone scheme
needs to be estimated.
For the SL scheme above, a consistency estimate similar to \eqref{eq:EM} holds with 
$$
  \big|\cE^v_{\ccS_M}(\tau,\dx,\da)\big|\leq 
   \frac{\tau}{2}\|v_{tt}\|_\infty 
  + 
   \frac{2}{3} \tau \| D^4 v\|_\infty 
  +\frac{1}{8}\frac{(\dx_1^2 +\dx_2^2)}{\tau} \| D^2 v\|_\infty + \frac{\sqrt{10} \|D^2v\|_\infty + 2}{4} \da^2,
$$
where we have denoted 
$\|D^p v\|_\infty:=\max_{k=0,\dots,p} \|\frac{\partial^4 v}{\partial x^k y^{p-k}}\|_\infty$.
We take $\da$ of the same order as $\dx$ and $\tau$ (i.e. $\da=\frac{2\pi}{P}=\dx_1=\dx_2=\frac{2\pi}{J}$,
and, with $T=0.5$, $\tau=\frac{1}{2N}\equiv \frac{\dx_1}{4\pi}$),
so that the error coming from $\da$ becomes negligible.
\COMMENTED{
\footnote{
\QUESTION{LOOK HERE FOR DETAILS - to be removed for final version} 
For $A\in \R^{2\times 2}$, Let $\|A\|_F = Tr(A A^T)^{1/2}$ and 
$\|D^2 v\|_{F,\infty}=\max_x \|D^2v(x)\|_F$. In particular, 
$\|D^2 v\|_{F,\infty}^2 \leq \|v_{xx}\|^2_\infty +  \|v_{yy}\|^2_\infty + 2 \|v_{xy}\|^2_\infty$.
Let $H:=(\dx_1,\dx_2)$. 
Elementary calculus gives $|v_{i+1,j+1}-2 v_{ij} + v_{i-1,j-1}|\leq \|D^2 v\|_{F,\infty} \|H\|^2_2$.
Then for $\dx_1=\dx_2$,
$|\partial^2 v_{xy}|\leq 2 \frac{(\dx_1^2+\dx_2^2)}{4\dx_1 \dx_2} \|D^2 v\|_{F,\infty} \leq \|D^2 v\|_{F,\infty}$. 
Also 
$|\partial^2 v_{xx}|\leq \|v_{xx}\|_\infty$, 
$|\partial^2 v_{yy}|\leq \|v_{yy}\|_\infty$.
So  
$\|\partial^2 v\|_{F,\infty}^2 
  \leq \|v_{xx}\|_\infty^2 + \|v_{yy}\|_\infty^2 + 2 \|D^2 v\|_{F,\infty}^2 
  \leq 10 \|D^2 v\|_{\infty}^2$,
So $\|\partial^2v\|_F\leq \cred{\sqrt{10}} \|D^2 v\|_{\infty}$.
Let $f(a):=Tr(a a^T \partial^2 v)+ \ell(t,x,a)$.
If $a$ is an extrema of $f(.)$, then 
$|f(a+h)-f(a)|
   \leq  \| D^2 f\|_{F,\infty}  \| h h^T \|_F 
   \leq  (\|\partial^2 v\|_{F,\infty}+2)\|h\|_2^2
   \leq  (\sqrt{10}\|D^2 v\|_{\infty}+2)\|h\|^2_2$.
Furthermore we see that $\|h\|_2 \leq \frac{1}{2}\|a_k-a_{k+1}\|_2 \leq \frac{1}{2}\frac{2\pi}{P} = \frac{1}{2} \da$.
Hence the extrema in $a$ is reached up to an error bounded by $ \frac{\sqrt{10} \|D^2v\|_\infty + 2}{4} \da^2$.
}
}
Using that $v_{tt}=0$ and $\|D^2v\|_\infty=\|D^4 v\|_\infty\leq 2$
in this example, this gives the bound
$ \big|\cE^v_{\ccS_M}(\tau,\dx,\da)\big|\leq  C^v_M\, \tau$ with a constant $C^v_M$ such that: 
$$
  C^v_M \leq \frac{4}{3} + 4\pi^2 \simeq 40.
$$
In Figure \ref{fig:eps2dLin} different convergence orders are observed, using $\vare = c_0 \tau$ with
 different values of $c_0$. 
We observe convergence of second order already for $c_0=0.8$, 
which is consistent with the upper bound 40
(see Remark~\ref{rem:choice_eps}).

Table~\ref{tab:2dLin} shows the results for $c_0=0.8$.
As the table shows, we obtain second order convergence for all norms and refinement levels considered.
The computational complexity is $O(N J^2 P)$, as is confirmed in the last column of Table~\ref{tab:2dLin}.

Figure~\ref{fig:ActiveFlin}, left, shows the solution, and the right plot the points of activity of the filter for different values of~$c_0$.
As soon as $c_0\geq 0.8$ we do not observe any use of the filter.


\begin{figure}
\centering
\includegraphics[width=0.7\columnwidth]{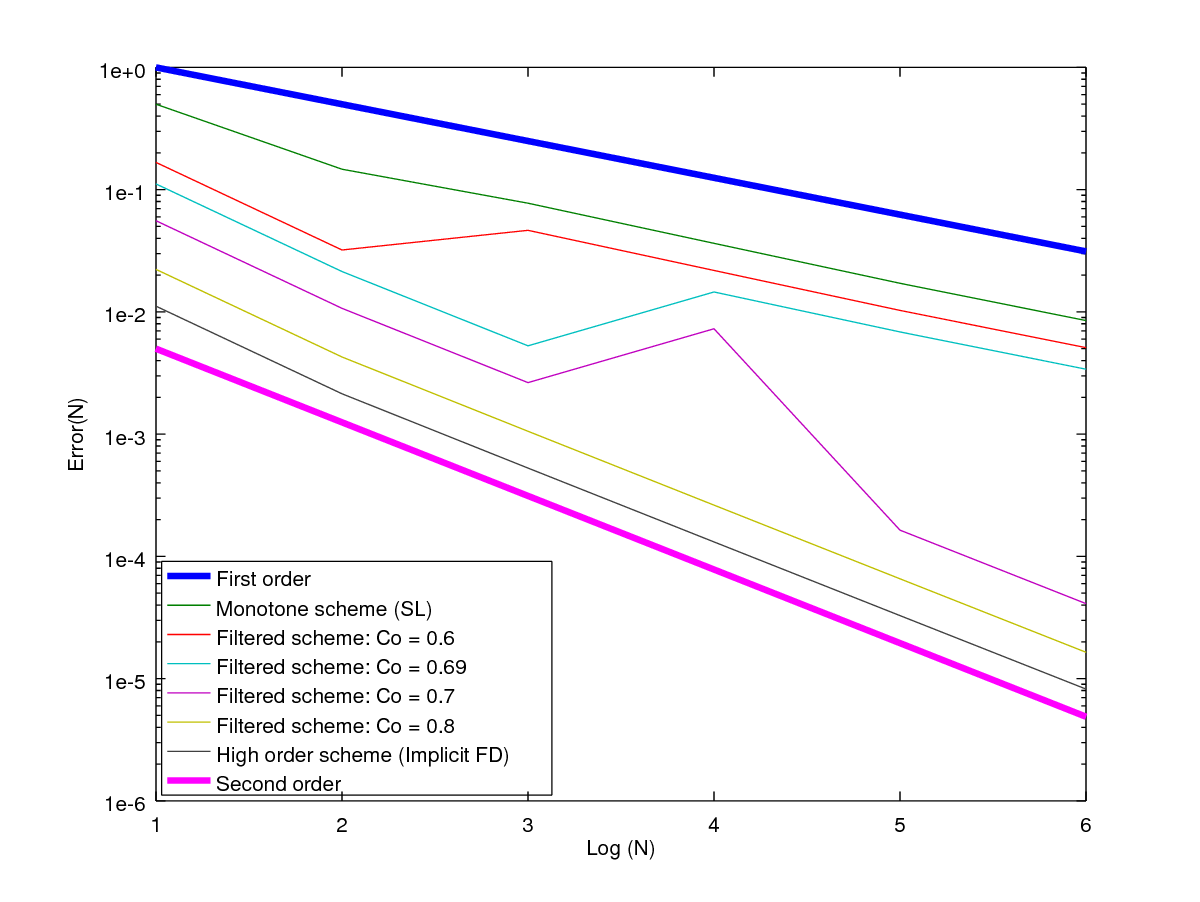}
\caption{(Example 3) Order of convergence in the $L^\infty$-norm for different values of $c_0$. 
Second order convergence is observed for $c_0\simeq0.8$ or greater values of $c_0$.
\label{fig:eps2dLin}                 
}
\end{figure}


\begin{figure}
\centering
\includegraphics[width=0.5\columnwidth]{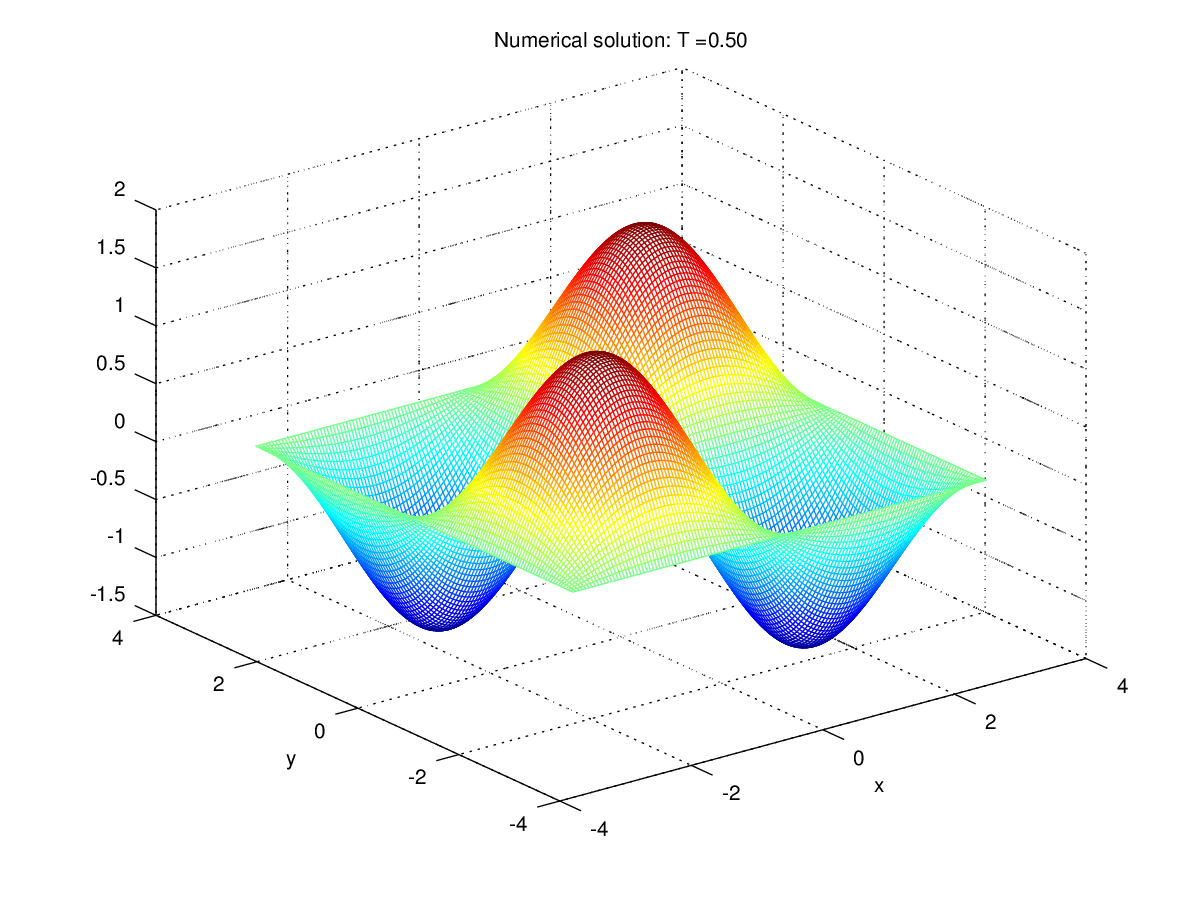}
\hspace{-1 cm}
\includegraphics[width=0.5\columnwidth]{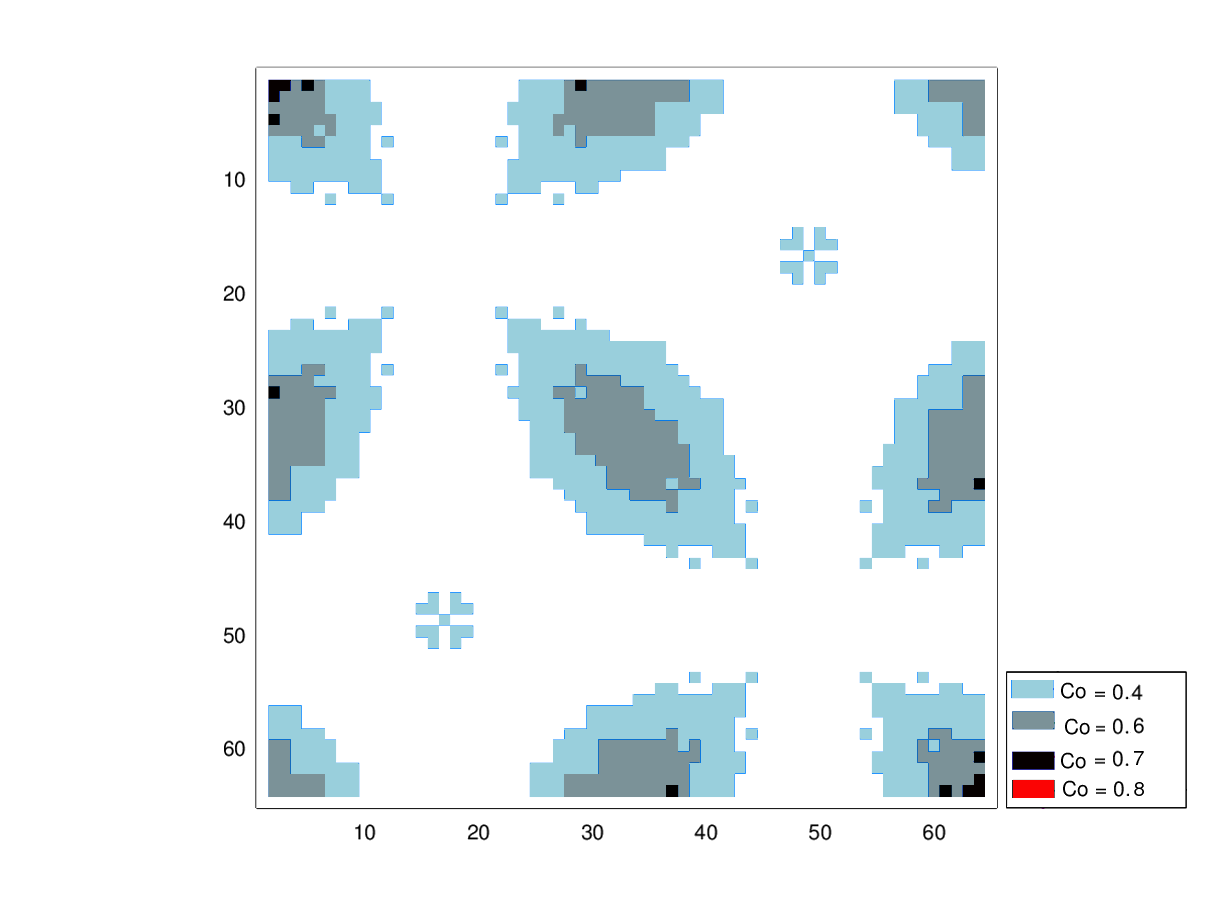}
\caption{(Example 3)
Left: value function at time $T=0.5$. Right: points of activity of the filter corresponding to different values of $c_0$. 
For $c_0\geq 0.4$ we do not observe any use of the filter.}\label{fig:ActiveFlin}
\end{figure}\begin{table}[!hbtp]
\centering
\begin{tabular}{ccc|cc|cc|cc|r}
$N$ & $J$ & $P$  & Error $L^1$ & order  & Error $L^2$ & order & Error $L^\infty$ & order & CPU (s)\\
\hline\hline
{   4} &     4  &   4 & 1.13E+00 &   -    & 9.27E-01 &   -    & 5.57E-01 &   -   &      1.3\\ 
{   8} &     8  &   8 & 4.16E-01 &  1.44  & 2.79E-01 &  1.73  & 1.08E-01 &  2.37 &      5.8\\ 
{  16} &    16  &  16 & 1.39E-01 &  1.58  & 9.61E-02 &  1.54  & 3.13E-02 &  1.78 &     71.9\\ 
{  32} &    32  &  32 & 3.76E-02 &  1.89  & 2.70E-02 &  1.83  & 8.82E-03 &  1.83 &   1068.2\\ 
{  64} &    64  &  64 & 9.58E-03 &  1.97  & 7.06E-03 &  1.94  & 2.31E-03 &  1.93 &  19380.0\\  
{ 128} &   128  & 128 & 2.51E-03 &  1.93  & 1.83E-03 &  1.95  & 6.10E-04 &  1.92 & 307450.0
\end{tabular}
\caption{(Example 3) Error and order of convergence for the filtered scheme with $c_0=0.8$.
\label{tab:2dLin}}
\end{table}

\section{Conclusions}\label{sec:concl}

Filtered schemes are designed to combine the advantages of the guaranteed convergence of low order monotone schemes 
and the superior accuracy -- in regions where the solution is smooth -- of higher order non-monotone schemes. 
The theoretical results in this paper confirm these properties.

In our numerical tests, the schemes delivered the accuracy of the high order scheme if the solution is smooth. 
For an example with a locally non-smooth solution, the filter was seen to turn a divergent higher order time stepping scheme (the Crank-Nicolson scheme) 
into a convergent scheme, albeit only at the order of the low-order scheme. 
Although non-monotone high order schemes with better stability (such as the BDF2 scheme) empirically gave second order convergence,
the filter reduced this order to one due to singularities of the higher order derivatives of the solution resulting in a wide application of the filter.


Ongoing works concern a more intrinsic choice of the $\vare$ parameter that is used in the filtered scheme.


%

\end{document}